\newtheorem{theorem}{Theorem}[section]
\newtheorem{theoremx}{Theorem}
\newtheorem{proposition}[theorem]{Proposition}
\newtheorem{corollary}[theorem]{Corollary}
\theoremstyle{definition}
\newtheorem{remark}[theorem]{Remark}
\theoremstyle{remark}
\numberwithin{equation}{section}
\let\epsilon\varepsilon
\newcommand{\C}			{{\operatorname{\mathbb{C}}}}
\newcommand{\R}			{{\operatorname{\mathbb{R}}}}
\newcommand{\cL}{\mathcal{L}}
\DeclareMathOperator{\supp}{supp}
\DeclareMathOperator{\id}{id}
\newcommand{\cU}{\mathcal{U}}
\newcommand{\SL}{\textrm{SL}}
\newcommand{\cS}{\mathcal{S}}
\renewcommand{\l}{\lambda}
\title[Multilinear transference of Fourier and Schur multipliers]{Multilinear transference of Fourier and Schur multipliers acting on non-commutative $L_p$-spaces}
\date{\noindent \today.  MSC2010 keywords: 22D25, 43A15,  46L51.  MC, GV, AK are supported by the NWO Vidi grant VI.Vidi.192.018 `Non-commutative harmonic analysis and rigidity of operator algebras'.  }
\author[Caspers]{Martijn Caspers}
\author[Krishnaswamy-Usha]{Amudhan Krishnaswamy-Usha}
\author[Vos]{Gerrit Vos}
\address{TU Delft, EWI/DIAM,
	P.O.Box 5031,
	2600 GA Delft,
	The Netherlands}
\email{M.P.T.Caspers@tudelft.nl}
\email{A.KrishnaswamyUsha@tudelft.nl}
\email{G.M.Vos@tudelft.nl}
\begin{document}

\begin{abstract}
Let $G$ be a locally compact unimodular group, and let $\phi$ be some function of $n$ variables on $G$. To such a $\phi$, one can associate a multilinear Fourier multiplier, which acts on some $n$-fold product of the non-commutative $L_p$-spaces of the group von Neumann algebra. One may also define an associated Schur multiplier, which acts on an $n$-fold product of  Schatten classes $S_p(L_2(G))$. We generalize well-known transference results from the linear case to the multilinear case. In particular, we show that the so-called `multiplicatively bounded $(p_1,\ldots,p_n)$-norm' of a multilinear Schur multiplier is bounded above by the corresponding multiplicatively bounded norm of the Fourier multiplier, with equality whenever the group is amenable. Further, we prove that the bilinear Hilbert transform is not bounded as a vector valued map $L_{p_1}(\mathbb{R}, S_{p_1}) \times L_{p_2}(\mathbb{R}, S_{p_2}) \rightarrow L_{1}(\mathbb{R}, S_{1})$, whenever $p_1$ and $p_2$ are such that $\frac{1}{p_1} + \frac{1}{p_2} = 1$. A similar result holds for  certain Calder\'on-Zygmund type operators. This is in contrast to the non-vector valued Euclidean case.
\end{abstract}

\maketitle

\section{Introduction}

In recent years the analysis of Fourier multipliers on non-commutative $L_p$-spaces has seen a rapid development. In particular several multiplier theorems have been established for the non-commutative $L_p$-spaces of a group von Neumann algebra, see e.g. \cite{JMPGafa}, \cite{MeiRicard}, \cite{CCP}, \cite{MRX},  \cite{PRS}, \cite{Tab}. Here the symbol of the multiplier is a function on a locally compact group and the multiplier acts on the  non-commutative $L_p$-space. In particular, the group plays the role of the frequency side.

In several of these approaches Schur multipliers are used to estimate the bounds of Fourier multipliers and vice versa. For instance, upper bounds on the norms of Fourier multipliers in terms of Schur multipliers play a crucial role in \cite{PRS}. Conversely,  transference from Fourier to Schur multipliers was used by Pisier \cite{Pisier98} to provide examples of bounded multipliers on $L_p$-spaces that are not completely bounded. In \cite{LafforgueDeLaSalle} analogous transference techniques were used to provide examples of non-commutative $L_p$-spaces without the completely bounded approximation property. Further, the use of multilinear Schur multipliers and operator integrals led to several surprising results such as the resolution of Koplienko's conjecture on higher order spectral shift \cite{PSSInventiones}, see also \cite{PSSTAdvances}.

\vspace{0.3cm}

Bo\.{z}ejko and Fendler proved the following in  \cite{BozejkoFendler}. Let $G$ be a locally compact group. Let  $\phi \in C_b(G)$ and set $\widetilde{\phi}(s,t) = \phi(s t^{-1}), s,t \in G$. Then the Schur multiplier $M_{\widetilde{\phi}}: B(L_2(G)) \rightarrow B(L_2(G))$ is bounded if and only if it is completely bounded if and only if the Fourier multiplier $T_{\phi}: \mathcal{L}G \rightarrow \mathcal{L}G$ is completely bounded.

Several papers have treated the extension of the Bo\.{z}ejko-Fendler result to non-commutative $L_p$-spaces. In particular, in \cite{NeuwirthRicard} Neuwirth and Ricard proved for a discrete group $G$ that
\[
\Vert M_{\widetilde{\phi}}: S_p(L_2(G)) \rightarrow S_p(L_2(G)) \Vert_{cb} \leq
\Vert T_\phi: L_p(\mathcal{L}G) \rightarrow L_p(\mathcal{L}G) \Vert_{cb}.
\]
If $G$ is moreover amenable then this is an equality. The same result was then obtained for $G$ a locally compact group in \cite{CaspersDeLaSalle}. An analogous result was obtained for actions and crossed products by Gonz\'alez-Perez \cite{gonzalez2018crossed} and in an ad hoc way in the bilinear discrete setting a similar result was obtained for the discrete Heisenberg group in  \cite[Section 7]{CJKM}.

The purpose of this paper is to prove transference results for Fourier and Schur multipliers in the multilinear setting for arbitrary unimodular locally compact groups. We confine ourselves to the unimodular setting for reasons further discussed in Remark \ref{Rmk=NonUnimodular}.  \\

Now we describe in more detail the contents of the paper. Our first main result (Theorem \ref{thm:transference}) is the following multilinear extension of \cite[Theorem 4.2]{CaspersDeLaSalle}. The definition of $(p_1, \dots, p_n)$-multiplicatively bounded maps is given in  Section \ref{Sect=MultilinearNorms}. If $n=1$ then we are in the linear case and by a well-known theorem of Pisier \cite{Pisier98} a map is `$p$-multiplicatively bounded' if and only if it is completely bounded as a map on the $L_p$-space with the natural operator space structure that was also introduced in  \cite{Pisier98}.

\begin{theoremx}
Let $G$ be a locally compact second countable unimodular group and let $1 \leq p \leq \infty$, $1<p_1, \ldots, p_n \leq \infty$  be such that $p^{-1} =  \sum_{i=1}^n p_i^{-1}$.
Let $\phi \in C_b(G^{\times n})$ and set $\widetilde{\phi} \in C_b(G^{\times n + 1})$ by
\begin{equation} \label{Eqn=phitilde}
    \widetilde{\phi}(s_0, \ldots, s_n) = \phi(s_0 s_1^{-1}, s_1 s_2^{-1}, \ldots, s_{n-1} s_n^{-1}), \qquad s_i \in G.
\end{equation}
If $\phi$ is the symbol of a $(p_1, \ldots, p_n)$-multiplicatively bounded Fourier multiplier $T_\phi$ of $G$, then $\widetilde{\phi}$ is the symbol of a $(p_1, \ldots, p_n)$-multiplicatively bounded Schur multiplier $M_{\widetilde{\phi}}$ of $G$. Moreover,
\[
\begin{split}
& \Vert M_{\widetilde{\phi}}: S_{p_1}(L_2(G)) \times \ldots \times S_{p_n}(L_2(G)) \rightarrow S_{p}(L_2(G)) \Vert_{(p_1,\ldots,p_n)-mb}\\
& \qquad  \leq
\Vert T_{\phi}: L_{p_1}(\mathcal{L}G  ) \times \ldots \times  L_{p_n}(\mathcal{L}G ) \rightarrow L_{p}(\mathcal{L}G ) \Vert_{(p_1,\ldots,p_n)-mb}.
\end{split}
\]
\end{theoremx}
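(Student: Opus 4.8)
The plan is to realize the multilinear Schur multiplier $M_{\widetilde\phi}$ as a limit of compressions of the multilinear Fourier multiplier $T_\phi$, via Fell's absorption principle together with a spatial windowing argument; this adapts the linear strategy of \cite{CaspersDeLaSalle} and the crossed product transference of \cite{gonzalez2018crossed} to the $n$-linear setting. Before doing so I would reduce to a convenient class of symbols: by mollifying $\phi$ against an approximate identity on $G^{\times n}$ and truncating to compact support, and using that the $(p_1,\ldots,p_n)$-mb norm of $T_\phi$ is translation invariant and does not increase under such averaging while $\widetilde\phi$ converges pointwise, lower semicontinuity of the mb norm of $M_{\widetilde\phi}$ reduces the estimate to the case $\phi \in C_c(G^{\times n})$. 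For such $\phi$ both multipliers are given by absolutely convergent integrals on the dense classes of finite group-algebra combinations $\lambda(f_i)$ (Fourier side) and finite-rank kernels $a_i(s,t)$ (Schur side), legitimizing the manipulations below.

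The core is a transference identity driven by the telescoping relation $s_0 s_n^{-1} = (s_0 s_1^{-1})(s_1 s_2^{-1})\cdots(s_{n-1}s_n^{-1})$. Writing the Schur kernel in the displacement variables $r_i = s_{i-1}s_i^{-1}$, its output ``frequency'' $r_1\cdots r_n = s_0 s_n^{-1}$ is exactly the product frequency produced by $T_\phi(\lambda(f_1),\ldots,\lambda(f_n)) = \int \phi(r_1,\ldots,r_n)\, f_1(r_1)\cdots f_n(r_n)\, \lambda(r_1\cdots r_n)\, dr_1\cdots dr_n$; the only extra datum in the Schur picture is the spatial base point $s_0$. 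I would supply this base point through the absorption unitary $W$ on $L_2(G)\otimes L_2(G)$, $(W\xi)(s,t) = \xi(s,s^{-1}t)$, which satisfies $W(\lambda(r)\otimes 1)W^{*} = \lambda(r)\otimes\lambda(r)$. Conjugating the amplified Fourier multiplier $\mathrm{id}_{B(L_2(G))}\otimes T_\phi$ by $W$ couples the output frequency to a spatial translation, and compressing the spatial leg against a normalized window $f_V = \Vol(V)^{-1/2}\,\mathbf{1}_V$, for a relatively compact neighbourhood $V$ of the identity, localizes the kernel. As $V \downarrow \{e\}$ these compressions converge to precisely the Schur kernel $\int_{G^{n-1}} \phi(s_0 s_1^{-1},\ldots,s_{n-1}s_n^{-1})\, a_1(s_0,s_1)\cdots a_n(s_{n-1},s_n)\, ds_1\cdots ds_{n-1}$. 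Since $W$ is a complete isometry and the window compressions are complete contractions on the relevant $S_p$-spaces, this already yields the asserted bound for the plain multilinear norms.

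To upgrade to the $(p_1,\ldots,p_n)$-mb norm of Section \ref{Sect=MultilinearNorms}, I would rerun the construction with each input replaced by an element of the matrix-amplified space $S_{q_i}(\ell_2 \otimes L_2(G))$: the unitary $W$ and the window compressions act only on the group legs and therefore commute with $\mathrm{id}_{S_{q_i}}$, so the transference identity holds verbatim after amplification and the mb estimate transfers. The main obstacle is exactly this limiting step, which I expect to be the most delicate part of the argument: one must show the compressed Fourier multipliers converge to $M_{\widetilde\phi}$ in the $n$-linear operator norm uniformly over all amplifications, and justify interchanging the limit $V\downarrow\{e\}$ with the non-discrete convolution integrals. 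I would handle this by fixing $V$ and finite-rank inputs first, then invoking the lower semicontinuity (Fatou property) of the $S_p$-norms. Here unimodularity of $G$ enters in an essential way: it is what makes the displacement substitution $(s_1,\ldots,s_{n-1}) \mapsto (r_1,\ldots,r_{n-1})$ measure preserving and the ambient trace tracial, so that the windows remain correctly normalized in the limit --- which is the structural reason the hypothesis cannot be dropped (cf. Remark \ref{Rmk=NonUnimodular}).
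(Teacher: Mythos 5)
Your overall architecture --- reduce to finite--rank data, intertwine the Fourier multiplier with the Schur multiplier via a Fell--type unitary, and pass to a limit over an approximate identity/window --- is the same family of ideas as the paper's proof of Theorem \ref{thm:transference} (which uses the unitary $U=\sum_{s\in F}p_s\otimes\lambda_s$ and the frequency--localized states $h_\alpha^{2/q}$ built from $|U_\alpha|^{-1/2}\lambda(1_{U_\alpha})$, a picture dual to your spatial window $\Vol(V)^{-1/2}1_V$). However, there are two genuine gaps. First, the preliminary reduction to $\phi\in C_c(G^{\times n})$ is not justified: multiplying the symbol by an indicator (or any cutoff that is not of the form ``tensor product of compactly supported positive definite functions in $A(G)$'') is itself a multiplier operation whose norm you cannot control by $\Vert T_\phi\Vert_{(p_1,\ldots,p_n)-mb}$. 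The paper localizes instead by multiplying with translated positive definite functions $\zeta_j\in A(G)$ with $\zeta_j=1$ at the relevant group elements, precisely so that the error is a contractive perturbation; mollification alone (the paper's $\phi_k$, controlled by \cite[Lemma 4.3]{CJKM}) is fine, but truncation is not.

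Second, and more seriously, the step you yourself flag as delicate --- that the window--compressed conjugates of $T_\phi^{(N)}$ converge to $M_{\widetilde\phi}$ in multilinear operator norm --- is the entire content of the proof, and the tools you propose (fixing finite--rank inputs, Fatou/lower semicontinuity of $S_p$--norms) do not address it. The difficulty is that the identity
$T_{\phi}(\lambda_{r_0}h^{2/p_1}\lambda_{r_1}^*,\ldots,\lambda_{r_{n-1}}h^{2/p_n}\lambda_{r_n}^*)=\phi(r_0r_1^{-1},\ldots,r_{n-1}r_n^{-1})\,\lambda_{r_0}h^{2/p}\lambda_{r_n}^*$ is only \emph{approximately} true: the $n$ approximate identities sitting in the intermediate slots do not recombine exactly into a single one in the output slot, and the error accumulates through the intermediate variables $s_1,\ldots,s_{n-1}$ over which the Schur kernel integrates. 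In the linear case \cite{CaspersDeLaSalle} this forced a restriction to rational exponents via factorization; in the multilinear case the paper controls the error by the machinery of \cite[Lemma 4.6]{CJKM} (the splitting into the three terms $A_{k,\alpha}$, $B_{k,\alpha}$, $C_{k,\alpha}$, the limits \eqref{Eqn=Limits} for $(T_{\zeta_j}-\id)(y_{j,\alpha})$, and commutator estimates for nested multipliers). Without an argument of this type your claim that the compressions ``converge to precisely the Schur kernel'' is an assertion of the theorem's hardest point rather than a proof of it, so the proposal as written does not close.
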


Note that in the linear (unimodular, second countable) case of $n=1$, this result is actually a strengthening of \cite[Theorem 4.2]{CaspersDeLaSalle}. Namely, the symbol here is only assumed to define an $L_p$-Fourier multiplier at a single exponent $p$, whereas \cite[Theorem 4.2]{CaspersDeLaSalle} requires that the multiplier is (completely) bounded for all $1 \leq p \leq \infty$ simultaneously. The current proof takes a different route and uses results that appeared after \cite{CaspersDeLaSalle} in the papers  \cite{CPPR}, \cite{RicardIsrael} and \cite{CJKM}. \\

For transference in the other direction we need our group $G$ to be amenable, just as in \cite{NeuwirthRicard}, \cite{CaspersDeLaSalle}. In fact, amenability is a necessary requirement for our proof strategy, see \cite[Theorem 2.1]{CaspersDeLaSalle}.   The following is our second main result (Corollary \ref{Cor = AmenableTransference}). Note here the strict bounds $1 < p < \infty$, caused by the requirement that the maps $i_q$ from Theorem \ref{thm:amenable-intertwining} are complete isometries.

\begin{theoremx}
Let $G$ be a locally compact unimodular amenable group and let $1 <  p,p_1,\ldots,p_n < \infty$ be such that $p^{-1}=\sum_{i=1}^n p_i^{-1}$. Let $\phi \in C_b(G^{\times n})$ and define $\widetilde{\phi}$ as in \eqref{Eqn=phitilde}.  If $\widetilde{\phi}$ is the symbol of a $(p_1, \ldots, p_n)$-bounded (resp. multiplicatively bounded) Schur multiplier then $\phi$ is the symbol of a $(p_1, \dots, p_n)$-bounded (resp. multiplicatively bounded) Fourier multiplier.  Moreover,
\[
\Vert T_\phi \Vert_{(p_1,\ldots,p_n)} \leq \Vert M_{\widetilde{\phi}} \Vert_{(p_1,\ldots,p_n)}, \qquad
\Vert T_\phi \Vert_{(p_1,\ldots,p_n)-mb} \leq \Vert M_{\widetilde{\phi}} \Vert_{(p_1,\ldots,p_n)-mb},
\]
with equality in the $(p_1,\ldots,p_n)$-$mb$ norm when $G$ is second countable.
\end{theoremx}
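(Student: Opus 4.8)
The plan is to deduce this corollary formally from the intertwining provided by Theorem~\ref{thm:amenable-intertwining}. That result supplies, for each $1 < q < \infty$ and amenable $G$, a complete isometry $i_q$ from $L_q(\mathcal{L}G)$ into a target space assembled from copies of $S_q(L_2(G))$, compatible with the multipliers in the sense that
\[
i_p \circ T_\phi = \widehat{M}_{\widetilde{\phi}} \circ \big( i_{p_1} \times \cdots \times i_{p_n} \big),
\]
where $\widehat{M}_{\widetilde{\phi}}$ denotes the multiplier induced by $M_{\widetilde{\phi}}$ on the targets. The requirement that the $i_q$ be \emph{isometries} forces $1 < q < \infty$, which is the origin of the strict exponent bounds in the statement.

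First I would treat the $(p_1,\ldots,p_n)$-bounded case. For $f_j \in L_{p_j}(\mathcal{L}G)$, the isometry of $i_p$ together with the intertwining gives
\[
\Vert T_\phi(f_1,\ldots,f_n) \Vert_p = \Vert \widehat{M}_{\widetilde{\phi}}\big( i_{p_1}(f_1),\ldots, i_{p_n}(f_n)\big) \Vert ,
\]
and bounding the right-hand side by $\Vert \widehat{M}_{\widetilde{\phi}} \Vert_{(p_1,\ldots,p_n)} \prod_j \Vert i_{p_j}(f_j) \Vert = \Vert \widehat{M}_{\widetilde{\phi}} \Vert_{(p_1,\ldots,p_n)} \prod_j \Vert f_j \Vert_{p_j}$ yields the first inequality, provided one knows that $\widehat{M}_{\widetilde{\phi}}$ has the same $(p_1,\ldots,p_n)$-norm as $M_{\widetilde{\phi}}$. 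Since the target fibers are isometric copies of $S_q(L_2(G))$ on which $\widehat{M}_{\widetilde{\phi}}$ acts fiberwise, this is the standard fact that such an ultraproduct or direct-integral construction does not change the multilinear norm of a fixed multiplier.

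For the multiplicatively bounded case I would repeat the computation after amplifying by Schatten matrix coefficients. Because each $i_q$ is a \emph{complete} isometry, the amplified maps $\id \otimes i_q$ are again isometric and intertwine the amplified Fourier and induced Schur multipliers, so the identical estimate at the level of the mb-norm gives $\Vert T_\phi \Vert_{(p_1,\ldots,p_n)-mb} \leq \Vert M_{\widetilde{\phi}} \Vert_{(p_1,\ldots,p_n)-mb}$. Finally, the asserted equality for second countable $G$ follows by pairing this inequality with Theorem~A: once $\phi$ is known to be the symbol of a $(p_1,\ldots,p_n)$-mb Fourier multiplier, Theorem~A applies (second countable unimodular $G$) and yields the reverse inequality $\Vert M_{\widetilde{\phi}} \Vert_{(p_1,\ldots,p_n)-mb} \leq \Vert T_\phi \Vert_{(p_1,\ldots,p_n)-mb}$, whence equality.

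Given that Theorem~\ref{thm:amenable-intertwining} already carries the analytic weight, the corollary is essentially formal, and the only genuine point requiring care is the identification $\Vert \widehat{M}_{\widetilde{\phi}} \Vert = \Vert M_{\widetilde{\phi}} \Vert$ for both the $(p_1,\ldots,p_n)$- and the mb-norm. I expect this to be the main obstacle: while intuitively clear, since all fibers are isometric copies of the same space and the multiplier acts fiberwise, making the operator space identification precise — in particular verifying that the completely bounded multilinear norm of a fiberwise multiplier coincides with that on a single fiber — is the step demanding the most attention.
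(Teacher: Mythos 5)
Your proposal follows essentially the same route as the paper's proof: apply the intertwining from Theorem~\ref{thm:amenable-intertwining}, use that $i_p$ is a (complete) isometry for $1<p<\infty$, estimate the Schur multiplier, extend by density of $\lambda(C_c(G)\star C_c(G))$ in $L_{p_i}(\cL G)$, and get the reverse mb-inequality for second countable $G$ from Theorem~\ref{thm:transference}. The one point you single out as the main obstacle --- identifying $\Vert\widehat{M}_{\widetilde{\phi}}\Vert$ with $\Vert M_{\widetilde{\phi}}\Vert$ on the ultraproduct --- does not actually arise in the paper's argument, since Theorem~\ref{thm:amenable-intertwining} applies $M_{\widetilde{\phi}}$ fiberwise at each index $\alpha$ and the ultraproduct norm is the $\cU$-limit of the fiber norms, each of which is bounded by $\Vert M_{\widetilde{\phi}}\Vert_{(p_1,\ldots,p_n)}\prod_i\Vert i_{p_i,\alpha}(x_i)\Vert \leq \Vert M_{\widetilde{\phi}}\Vert_{(p_1,\ldots,p_n)}\prod_i\Vert x_i\Vert_{p_i}$ because each $i_{p_i,\alpha}$ is a contraction.
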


The proof is a multilinear version of the ultraproduct techniques from \cite[Theorem 5.2]{CaspersDeLaSalle} and \cite{NeuwirthRicard}.  \\

In a final section, which can mostly be read separately from the rest of the paper, we consider the case of vector valued bilinear Fourier multipliers on $\R$. Lacey and Thiele have shown in \cite{LaceyThiele} that the bilinear Hilbert transform is bounded from $L_{p_1}(\mathbb{R}) \times L_{p_2}(\mathbb{R}) \to L_p(\mathbb{R})$, when  $\frac{2}{3} < p < \infty$ and $\frac{1}{p}=\frac{1}{p_1}+\frac{1}{p_2}$. { The vector valued bilinear Hilbert transform is bounded as a map from 
\[ L_{p_1}(\mathbb{R},S_{q_1})\times L_{p_2}(\mathbb{R},S_{q_2})\to L_p(\mathbb{R},S_q)\] whenever $1<\frac{1}{\max\{q,q'\}} + \frac{1}{\max\{q_1,q_1'\} }+ \frac{1}{\max\{q_2,q_2'\}}$, as shown by Amenta and Uraltsev in \cite{Amenta} and Di Plinio, Li, Martikainen and Vourinen in \cite{DiPlinio22}. In particular, this class does not include H\"older combinations of $q_i$. We show that this result does not extend to the case when $p_i=q_i,p=q=1$, using a transference method   similar to the ones used in earlier sections}. To be precise, we prove the following result (Theorem \ref{Thm=LowerBoundThiele}).

\begin{theoremx}
Let $1  < p_1, p_2 < \infty$ be such that $\frac{1}{p_1} + \frac{1}{p_2} = 1$ and set $h(s,t) = \chi_{\geq 0}(s-t)$.
There exists an absolute constant $C >0$ such that for every $N \in \mathbb{N}_{\geq 1}$ we have
\[
\Vert T_h^{(N)}: L_{p_1}(\mathbb{R}, S_{p_1}^N ) \times  L_{p_2}(\mathbb{R}, S_{p_2}^N ) \rightarrow L_{1}(\mathbb{R}, S_{1}^N ) \Vert > C \log(N).
\]
\end{theoremx}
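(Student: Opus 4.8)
The plan is to bound the operator norm from below by testing $T_h^{(N)}$ on a single explicit pair of matrix-valued functions chosen so that the bilinear multiplier collapses to the triangular truncation of the rank-one all-ones matrix; this is precisely the transference mechanism of the earlier sections, specialised to modulated matrix units. I will use the bilinear multiplier in the form $T_h^{(N)}(f_1,f_2)(x)=\iint h(\xi,\eta)\,\widehat{f_1}(\xi)\widehat{f_2}(\eta)\,e^{2\pi i x(\xi+\eta)}\,d\xi\,d\eta$, with the inner product being matrix multiplication (so that $S_{p_1}\times S_{p_2}\to S_1$ by H\"older). Writing $e_{ij}$ for the matrix units of $M_N(\C)$, fixing a Schwartz $\psi$ whose transform $\widehat\psi$ is supported in $(-\tfrac14,\tfrac14)$, and fixing the interleaved grids $\theta_i=2i$, $\omega_j=2j-1$, I would set
\[
f_1(x) = \psi(x)\sum_{i=1}^N e^{2\pi i \theta_i x}\, e_{i1}, \qquad f_2(x) = \psi(x)\sum_{j=1}^N e^{2\pi i \omega_j x}\, e_{1j},
\]
so that $f_1$ is supported in the first column and $f_2$ in the first row.

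The core computation is that the bilinear product $\widehat{f_1}(\xi)\widehat{f_2}(\eta)=\sum_{i,j}\widehat\psi(\xi-\theta_i)\widehat\psi(\eta-\omega_j)\,e_{ij}$ tags each matrix unit $e_{ij}=e_{i1}e_{1j}$ with the frequency pair $(\theta_i,\omega_j)$. Since $\theta_i-\omega_j=2(i-j)+1$ is an odd integer while $\widehat\psi$ has width $<\tfrac12$, the symbol $h(\xi,\eta)=\chi_{\geq 0}(\xi-\eta)$ is identically the constant $\chi_{\{i\ge j\}}$ on the support of the $(i,j)$-th term and never meets its jump at $\xi=\eta$; the restriction to a triangular pattern is therefore exact rather than approximate. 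Carrying out the (now trivial) frequency integrals by Fourier inversion yields the identity
\[
T_h^{(N)}(f_1,f_2)(x) = \psi(x)^2\, D_1(x)\, L\, D_2(x), \qquad L = \sum_{i\ge j} e_{ij},
\]
where $D_1(x),D_2(x)$ are the diagonal unitaries with entries $e^{2\pi i\theta_i x}$, $e^{2\pi i \omega_j x}$ and $L$ is the lower-triangular all-ones matrix.

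It remains to compare norms. At each $x$ the matrices $f_1(x),f_2(x)$ have rank one, so $\|f_1(x)\|_{S_{p_1}}=\|f_2(x)\|_{S_{p_2}}=|\psi(x)|\sqrt N$ and hence $\|f_1\|_{L_{p_1}(S_{p_1}^N)}\|f_2\|_{L_{p_2}(S_{p_2}^N)}=N\,\|\psi\|_{p_1}\|\psi\|_{p_2}$. As $D_1(x),D_2(x)$ are unitary, $\|T_h^{(N)}(f_1,f_2)(x)\|_{S_1}=|\psi(x)|^2\|L\|_{S_1}$, so the output norm equals $\|\psi\|_2^2\,\|L\|_{S_1}$. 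The decisive input is the trace norm of $L$: from $LL^{*}=(\min(i,j))_{ij}$ one reads off the singular values $\sigma_k(L)=\tfrac12\csc\!\big(\tfrac{(2k-1)\pi}{2(2N+1)}\big)$, so that $\|L\|_{S_1}=\sum_{k=1}^N\sigma_k(L)\ge \tfrac{N}{\pi}\log N$ for large $N$, using $\sin t\le t$. Combining these,
\[
\frac{\|T_h^{(N)}(f_1,f_2)\|_{L_1(S_1^N)}}{\|f_1\|_{L_{p_1}(S_{p_1}^N)}\|f_2\|_{L_{p_2}(S_{p_2}^N)}} = \frac{\|\psi\|_2^2}{\|\psi\|_{p_1}\|\psi\|_{p_2}}\cdot\frac{\|L\|_{S_1}}{N}\gtrsim \log N,
\]
and fixing one admissible $\psi$ makes the prefactor a positive constant bounded below uniformly over the admissible $(p_1,p_2)$, so the resulting $C$ is absolute.

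The heart of the argument, and the only place where $\log N$ appears, is the trace-norm blow-up $\|L\|_{S_1}\asymp N\log N$, which is exactly the classical statement that triangular truncation is unbounded with logarithmic growth on $S_1^N$; everything else is the bookkeeping needed to realise this truncation as a restriction of the vector-valued bilinear Hilbert transform. The two points requiring genuine care are (i) choosing interleaved grids together with a compactly supported $\widehat\psi$ so that the symbol restricts to an exact triangular pattern and never touches its discontinuity, turning the expected approximation into an identity, and (ii) checking that the ratio of $\psi$-norms stays bounded below uniformly in $(p_1,p_2)$, which is what makes the final constant absolute.
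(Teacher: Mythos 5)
Your proof is correct, but it takes a genuinely different and more direct route than the paper. The paper first mollifies $h$ to a continuous symbol $H$ agreeing with $h$ on $\mathbb{Z}\times\mathbb{Z}$, applies the multilinear de Leeuw restriction theorem of \cite{CJKM} to pass from $\mathbb{R}$ to $\mathbb{T}$, transfers the resulting periodic Fourier multiplier to a bilinear Schur multiplier on $S_{p_1}^{2N+1}\times S_{p_2}^{2N+1}$ via the unitary $U=\sum_l p_l\otimes\zeta_l$, reduces that bilinear Schur multiplier to a linear one by freezing the middle variable (using \cite[Theorem 2.3]{PSSTAdvances}), and only then invokes Davies' lower bound for triangular truncation on $S_1$. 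You instead exhibit an explicit extremizing pair of rank-one, modulated matrix-valued functions; the interleaved integer grids $\theta_i=2i$, $\omega_j=2j-1$ together with a band-limited $\psi$ ensure the symbol is \emph{exactly} the triangular pattern on the relevant frequency supports (so no mollification or restriction step is needed), and the computation collapses to the trace-norm asymptotics $\Vert L\Vert_{S_1}\gtrsim N\log N$ of the lower-triangular all-ones matrix --- which is the same single source of the $\log N$ in both arguments (Davies). Your version is shorter, self-contained, and makes the extremizers visible, at the cost of not illustrating the transference machinery that is the paper's main theme; your observations that the Schatten normalization cancels because $\tfrac1{p_1}+\tfrac1{p_2}=1$ and that $\Vert\psi\Vert_{p_1}\Vert\psi\Vert_{p_2}\le\max(\Vert\psi\Vert_1,\Vert\psi\Vert_\infty)^2$ keeps the constant absolute are exactly the right checks.
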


Additionally, we show a similar result for Calder\'on-Zygmund operators. Here Grafakos and Torres \cite{GrafakosTorres} have shown that for a class of Calder\'on-Zygmund operators we have boundedness $L_1 \times L_1 \rightarrow L_{\frac{1}{2}, \infty}$ in the Euclidean case.  Later, a vector valued extension was obtained in \cite{DiPlinio20}. Here for a class of Calder\'on-Zygmund operators the boundedness of the vector valued map was obtained for $L_{p_1} \times L_{p_2} \rightarrow L_p$ with $1 < p, p_1, p_2 < \infty$ and $\frac{1}{p}=\frac{1}{p_1}+\frac{1}{p_2}$. Theorem \ref{Thm=CZLowerBound} shows that the latter result cannot be extended to the case when $p=1$.

\vspace{0.3cm}

The structure of this paper is as follows. In Section \ref{Sect=Preliminaries} we treat the necessary preliminaries and establish the definitions of multilinear Fourier and Schur multipliers. We also look briefly at transference in the case that $p_i = \infty$ for all $i$. In Section \ref{Sect=Fourier->Schur} we prove the transference from Fourier to Schur multipliers for general $p_i$; the other direction for amenable $G$ is proven in Section \ref{Sect=Schur->Fourier}. Finally, in Section \ref{Sect=HilbertTransform}, we give the counterexamples for the vector valued bilinear Hilbert transform and Calder\'on-Zygmund operators.

\vspace{0.3cm}

\noindent {\it Acknowledgements.} We would like to thank Cl\'ement Coine, { Francesco Di Plinio}, Christian Le Merdy, \'Eric Ricard, Lyudmyla Turowska, and  Gennady Uraltsev for useful discussions, feedback and/or communication about the contents of this paper.

\section{ Preliminaries } \label{Sect=Preliminaries}

\subsection{Notational conventions}
$\mathbb{N}$ denotes the natural numbers starting from 0 and $\mathbb{N}_{\geq 1}$ denotes $\mathbb{N} \backslash \{ 0 \}$.
$M_n := M_n(\mathbb{C})$ denotes the complex $n \times n$ matrices. We denote $B(H)$ for the bounded operators on a Hilbert space $H$.
We denote by $E_{st}, 1 \leq s,t \leq n$ the matrix units of $M_n$. Likewise $E_{st}$, $s,t \in F$ denote the matrix units of $B(\ell_2(F))$ whenever $F$ is a finite set.
We also use $1_F$ to denote the indicator function on the set $F$.

\subsection{Locally compact groups}
Let $G$ be a locally compact group which we assume to be unimodular with Haar measure $\mu_G$, see Remark \ref{Rmk=NonUnimodular}. Integration against the Haar measure is denoted by $\int \cdot \: ds$. For $1 \leq p <\infty$ we let $L_p(G)$ be the $p$-integrable functions with norm determined by $\Vert f \Vert_p^p = \int \vert f(s) \vert^p ds$. $C_c(G)$ denotes the continuous and compactly supported functions on $G$.
$L_1(G)$ is a $\ast$-algebra with multiplication given by convolution $(f\ast g)(t) = \int f(s) g(s^{-1} t) ds$ and involution given by $f^\ast(s) = \overline{ f(s^{-1})}$.
$\lambda$ denotes the left regular representation of $G$ on $L_2(G)$, i.e. $(\lambda_s f)(t) = f(s^{-1}t)$.  $\lambda$ also determines a representation of $L_1(G)$ by the strongly convergent integral $\lambda(f)   = \int f(s) \l_s  ds$. The Fourier algebra \cite{Eymard} is defined as
\begin{equation} \label{Eqn=FourierAlg}
A(G) := L_2(G) \ast L_2(G) = \{ s \mapsto \langle \l_s \xi, \eta \rangle \mid \xi, \eta \in L_2(G) \}.
\end{equation}
  Set the group von Neumann algebra
\[
    \cL G = \{\lambda_s \mid  s \in G\}'' = \{\lambda(f) \mid f \in L_1(G)\}''.
\]
  It comes equipped with a natural weight $\varphi$ called the \emph{Plancherel weight} that is given, for $x \in \cL G$, by
  \[
  \varphi(x^\ast x) = \left\{
  \begin{array}{ll}
   \Vert f \Vert_2^2 & \textrm{ if } \exists f \in L_2(G) \textrm { s.t. } \forall \xi \in C_{c}(G): x \xi = f \ast \xi, \\
   \infty & \textrm{ otherwise}.
  \end{array}
  \right.
  \]
 $\varphi$ is tracial since (in fact if and only if) $G$ is unimodular, i.e. $\varphi(x^\ast x ) = \varphi(x x^\ast)$.

\subsection{Non-commutative $L_p$ spaces}
Let $L_p(\cL G)$ denote the non-commutative $L_p$ space associated with $\cL G$ and the Plancherel weight $\varphi$. Since $\varphi$ is a trace, this space can be viewed as the completion of the set of elements in $\cL G$ with finite $\|.\|_{L_p(\cL G)}$ norm:
\[ L_p(\cL G) = \overline{\left\{ x \in \cL G : \| x \|_{L_p(\cL G)} = \varphi( |x|^p)^{1/p} < \infty \right\}}^{\|.\|_{L_p(\cL G)}}.\]
Let $C_c(G) \star C_c(G)$ denote the span of the set of functions of the form $f_1 \ast f_2, f_i \in C_c(G)$. Then $\lambda(C_c(G)\star C_c(G))$   is dense in $L_p(\cL G)$ for every $1\leq p <\infty$ and is weak* dense in $L_\infty(\cL G)=\cL G$.

\subsection{Multilinear Fourier multipliers}

Let $\phi \in C_b(G^{\times n})$. The Fourier multiplier $T_\phi$ associated with the symbol $\phi$ is the multilinear map defined for $\lambda(f_i), f_i \in C_c(G)\star C_c(G)$ by
\[
T_\phi(\lambda(f_1),\ldots, \lambda(f_n)) = \int_{G^{\times n}} \phi(t_1,\ldots,t_n) f_1(t_1)\ldots f_n(t_n) \lambda(t_1\ldots t_n) dt_1 \ldots dt_n.
\]
Let $1\leq p_1, \ldots, p_n, p < \infty$ with $p^{-1}=\sum_i p_i^{-1}$.   Assume $T_\phi$ maps $\lambda(C_c(G)\star C_c(G)) \times \ldots \times \lambda(C_c(G)\ast C_c(G))$ into $L_p(\cL G)$. Equip the $i$-th copy of $\lambda(C_c(G)\star C_c(G))$ with the $\|.\|_{L_{p_i}(\cL G)}$ topology. If $T_\phi$ is a continuous multilinear map with respect to the above topologies, we extend $T_\phi$ to a map $L_{p_1}(\cL G)\times \ldots \times L_{p_n}(\cL G) \to L_p (\cL G)$. By mild abuse of notation, we also denote this map by $T_\phi$, and call it the $(p_1,\ldots,p_n)$-Fourier multiplier associated with $\phi$. When some or all of the $p_i, p$ are equal to $\infty$, we equip the corresponding copy of $\lambda(C_c(G) \star C_c(G))$ with the norm topology from $C^\ast_\lambda(G)$, and replace $L_{p_i}(\cL G)$ by $C^\ast_\lambda(G)$.

By a closed graph argument, $T_\phi$ is then a bounded multilinear map, and we denote its norm by $\Vert T_\phi \Vert_{(p_1,\ldots,p_n)}$.\\

\subsection{Schatten $p$-operators} \label{Schatten p operators}
For $1\leq p< \infty$  let $S_p(H)$ denote the Schatten $p$-operators on a Hilbert space $H$ consisting of all $x \in B(H)$ such that $\Vert x \Vert_{S_p} := {\rm Tr}(\vert x \vert^p)^{1/p} <\infty$ where ${\rm Tr}$ is the usual trace on $B(H)$. $S_\infty(H)$ denotes the compact operators on $H$.   For $1 \leq p \leq q \leq \infty$, we have the dense inclusions $S_p(H) \subseteq S_q(H)$.

Again, let $G$ be a unimodular locally compact group. For $F \subset G$  a relatively compact set with positive measure let $P_F:L_2(G)\to L_2(F)$ be the orthogonal projection. Then, for $1\leq p \leq \infty$, and $x \in L_{2p}( \cL G)$, $x P_F$ defines an operator in $S_{2p}(L_2(G))$ (see \cite[Proposition 3.1]{CaspersDeLaSalle}). For $x \in L_{p}(\cL G)$ with polar decomposition $x = u \vert x \vert$, we will abusively denote by $P_F x P_F$ the operator $ ( \vert x \vert^{1/2} u^\ast P_F)^\ast     \vert x \vert^{1/2} P_F $, which lies in $S_p(L_2(G))$ whenever $x \in L_p(\cL G)$. We will additionally use the fact that the map 
\[
x \mapsto  \mu_G (F )^{-1/p} P_F x P_F
\]
defines a contraction from $L_p(\cL G)$ to $S_p(L_2(G))$ \cite[Theorem 5.1]{CaspersDeLaSalle}.

Let $E$ be an operator space. For $N \in \mathbb{N}_{\geq 1}$, and $1\leq p \leq \infty$, $S_p^N[E]$ will denote the space $M_N(E)$ equipped with the `operator valued Schatten $p$-norm'. When $E=\C$, this is the Schatten $p$-class associated with a Hilbert space of dimension $n$, or equivalently, the non-commutative $L_p$-space associated with $M_N$ equipped with the normalized trace, and is denoted by just $S_p^N$. When $p=\infty$, the norm on $S_p[E]$ is the operator space norm on $M_N(E)$; for $p=1$, this is the projective operator space norm on $S_1^N\otimes E$. The rest are constructed via interpolation. The particulars will not be used in what follows, we refer to \cite{Pisier98} for the details. If $E$ is the non-commutative $L_p$ space associated with some tracial von Neumann algebra $\mathcal{M}$, $S_p^N[E]$ can be identified with the non-commutative $L_p$ space corresponding to $M_N \otimes \mathcal{M}$.

\subsection{Multilinear Schur multipliers}
Let $X$ be some measure space.  We identify $S_2(L_2(X))$ linearly and isometrically with the integral operators given by kernels in $L_2(X\times X)$. This way $A \in L_2(X\times X)$ corresponds to $(A \xi)(t) = \int A(t,s) \xi(s) ds$. Throughout what follows, we will make no distinction between a Hilbert-Schmidt operator on $L_2(X)$ and its kernel in $L_2(X \times X)$.\\

For $\phi \in L_\infty(X^{\times n+1})$ the associated Schur multiplier is the multilinear map $S_2(L_2(X))\times\ldots \times S_2(L_2(X)) \to S_2(L_2(X))$ determined by
\[
M_\phi( A_1,\ldots, A_n)(t_0,t_n) = \int_{X^{\times n-1}} \phi(t_0,\ldots,t_n) A_1(t_0,t_1)A_2(t_1,t_2)\ldots A_n(t_{n-1},t_n) dt_1 \ldots dt_{n-1}.
\]
That $M_\phi$ indeed takes values in $S_2(L_2(X))$, or rather $L_2(X \times X)$, is an easy application of the Cauchy-Schwarz inequality, as we show here in the case of $n=2$:
\[
\begin{split}
    \iint_{X^2} \left| \int_X \phi(r,s,t) A(r,s) B(s,t) ds\right|^2 drdt
    \leq & \|\phi\|_{\infty}^2 \int_X \int_X \left( \int_X |A(r,s)|^2 ds\right) dr \left( \int_X |B(s,t)|^2 ds\right) dt \\
    =& \|\phi\|_\infty^2 \|A\|_2^2 \|B\|_2^2.
\end{split}
\]
The case of higher order $n$ is similar to \cite[Lemma 2.1]{PSSTAdvances}.

Let $1\leq p,p_1,\ldots,p_n \leq \infty$, with $p^{-1}=\sum_{i=1}^n p_i^{-1}$. Consider the restriction of $M_\phi$ where its $i$-th input is restricted to $S_2(L_2(X)) \cap S_{p_i}(L_2(X))$. If the resulting restriction takes values in $S_p(L_2(X))$ and has a bounded extension to $S_{p_1}(L_2(X)) \times \ldots \times S_{p_n}(L_2(X))$, its extension, also denoted by $M_\phi$, is called the $(p_1, \dots, p_n)$-Schur multiplier.

\subsection{Norms for multilinear maps} \label{Sect=MultilinearNorms}

If $E_1,\ldots, E_n, E$ are Banach spaces and $T: E_1 \times \ldots \times E_n \to E$ is a multilinear map, we recall that $\| T \|$ is the quantity $\sup_{x_i \in E_i, \| x_i \| =1} \| T(x_1,\ldots,x_n)\|$.

If $E_1, \ldots, E_n, E$ are operator spaces, $T$ is a multilinear map, and $N \in \mathbb{N}_{\geq 1}$, the multiplicative amplification of $T$ refers to the map

\begin{equation}\label{Eqn=NAmplify}
T^{(N)}: M_N(E_1) \times \ldots \times M_N(E_n) \to M_N(E)
\end{equation}
which sends $x_i = \alpha_i \otimes v_i$ where $\alpha_i \in M_N, v_i \in E_i$, to $(\alpha_1 \ldots \alpha_n) \otimes T(v_1,\ldots,v_n)$. $T^{(N)}$ is viewed as a multilinear map on the space $M_N(E_i)$, which is equipped with the matrix norm from the operator space structure of $E_i$. We say $T$ is multiplicatively bounded if $\|T\|_{mb}=\sup_N \| T^{(N)}\| < \infty$.\\

Let $1\leq p, p_1, \ldots p_n \leq \infty$ with $p^{-1}=\sum_{i=1}^n p_i^{-1}$. Generalizing a definition by \cite{Xu}, we will say a multilinear map $T: E_1 \times \ldots  \times E_n \to E $ is $(p_1,\ldots,p_n)$-multiplicatively bounded if the multiplicative extensions
\[
 T^{(N) } :=  T^{(N)}_{p_1,\ldots,p_n}: S^{N}_{p_1}[E_1]\times \ldots \times S^{N}_{p_n}[E_n] \to S^{N}_p[E]
\]
have uniformly bounded norms. The $(p_1,\ldots,p_n)$-multiplicatively bounded norm of $T$ is then $\sup_N \Vert T^{(N)} \Vert$. It is denoted by $\|T\|_{(p_1, \dots, p_n)-mb}$.

\begin{remark} It is unclear if this definition of $(p_1, \ldots, p_n)$-multiplicative boundedness corresponds to complete boundedness of some linear map on some appropriate tensor product of the $E_i$'s. In the special case the range space is $\C$ and $n=2$ such a tensor product has been constructed in  \cite[Remark 2.7]{Xu}.   However, this tensor norm does not seem to admit a natural operator space structure, nor does it seem to work in the multilinear case.
\end{remark}

The norms of multilinear Schur multipliers are determined by the restriction of the symbol to finite sets. This is the multilinear version of \cite[Theorem 1.19]{LafforgueDeLaSalle} and \cite[Theorem 3.1]{CaspersDeLaSalle}.

\begin{theorem} \label{Thm=Finite truncation}
Let $\mu$ be a Radon measure on a locally compact space $X$, and $\phi: X^{n+1} \to \C$ a continuous function. Let $K>0$. The following are equivalent for $1 \leq p_1, \dots, p_n, p \leq \infty$:
\begin{enumerate}[(i)]
    \item $\phi$ defines a bounded Schur multiplier $S_{p_1}(L_2(X)) \times \ldots \times S_{p_n}(L_2(X)) \to S_p(L_2(X))$ with norm less than $K$.
    \item For every $\sigma$-finite measurable subset $X_0$ in $X$, $\phi$ restricts to a bounded Schur multiplier $S_{p_1}(L_2(X_0)) \times \ldots \times S_{p_n}(L_2(X_0)) \to S_p(L_2(X_0))$ with norm less than $K$.
    \item For any finite subset $F = \{s_1, \dots, s_N\} \subset X$ belonging to the support of $\mu$, the symbol $\phi|_{F^{\times (n+1)}}$ defines a bounded Schur multiplier $S_{p_1}(\ell_2(F)) \times \ldots \times S_{p_2}(\ell_2(F)) \to S_p(\ell_2(F))$ with norm less than $K$.
\end{enumerate}
The same equivalence is true for the $(p_1, \ldots, p_n)-mb$ norms.
\end{theorem}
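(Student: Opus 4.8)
The plan is to prove the two equivalences $(i)\Leftrightarrow(ii)$ and $(ii)\Leftrightarrow(iii)$ separately, and then to deduce the statement for the $(p_1,\ldots,p_n)$-$mb$ norms from the scalar case by an amplification argument. This follows the linear arguments of \cite[Theorem 1.19]{LafforgueDeLaSalle} and \cite[Theorem 3.1]{CaspersDeLaSalle}, the only change being that the product of two kernels is replaced by the $(n+1)$-point product in the definition of $M_\phi$ (cf. \cite[Lemma 2.1]{PSSTAdvances}). The equivalence $(i)\Leftrightarrow(ii)$ is the soft part. For $(i)\Rightarrow(ii)$, the inclusion $L_2(X_0)\hookrightarrow L_2(X)$ of a $\sigma$-finite $X_0$ induces isometric embeddings $S_q(L_2(X_0))\hookrightarrow S_q(L_2(X))$ for every $q$, under which $M_{\phi|_{X_0}}$ is literally a restriction of $M_\phi$, so its norm is at most $K$. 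For $(ii)\Rightarrow(i)$ I would use that Schatten operators live on $\sigma$-finite subsets: a finite-rank kernel $A_i$ is supported on $X_i\times X_i$ with $X_i$ $\sigma$-finite, so for finitely many inputs one takes $X_0=\bigcup_i X_i$, notes that $M_\phi(A_1,\ldots,A_n)=M_{\phi|_{X_0}}(A_1,\ldots,A_n)$ is supported on $X_0\times X_0$, and invokes $(ii)$; density of $S_2\cap S_{p_i}$ (finite-rank operators, resp.\ the compacts at $p_i=\infty$) in $S_{p_i}$ then propagates the bound to all inputs.

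The substance is $(ii)\Leftrightarrow(iii)$. For $(ii)\Rightarrow(iii)$, fix $F=\{s_1,\ldots,s_N\}\subset\supp\mu$ and choose disjoint relatively compact neighbourhoods $U_j\ni s_j$, each with $\mu(U_j)>0$ since $s_j\in\supp\mu$. Put $X_0=\bigsqcup_j U_j$ and let $V\colon\ell_2(F)\to L_2(X_0)$ be the isometry $Ve_j=\mu(U_j)^{-1/2}1_{U_j}$, so that $A\mapsto VAV^\ast$ preserves every Schatten norm while $V,V^\ast$ are contractions. I would compare $V^\ast M_{\phi|_{X_0}}(VA_1V^\ast,\ldots,VA_nV^\ast)V$ with $M_{\phi|_{F}}(A_1,\ldots,A_n)$: the two differ only by replacing $\phi$ on each block $U_{j_0}\times\cdots\times U_{j_n}$ by its value $\phi(s_{j_0},\ldots,s_{j_n})$, and since $F$ is finite and $\phi$ continuous this error tends to $0$ in the (finite-dimensional) matrix norm as the $U_j$ shrink to the $s_j$. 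Condition $(ii)$ then forces $\|M_{\phi|_F}(A_1,\ldots,A_n)\|_{S_p}\le K\prod_i\|A_i\|_{S_{p_i}}$.

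For $(iii)\Rightarrow(ii)$ I would run the same comparison in reverse. By density it suffices to bound $M_{\phi|_{X_0}}(A_1,\ldots,A_n)$ for $A_i\in S_2\cap S_{p_i}$ whose kernels are block-constant on a common finite measurable partition $\{E_j\}_{j=1}^m$ of a relatively compact part of $X_0$ into cells of small diameter; such operators are dense in each $S_{p_i}(L_2(X_0))$. Writing $W\colon\ell_2^m\to L_2(X_0)$, $We_j=\mu(E_j)^{-1/2}1_{E_j}$, and picking sample points $s_j\in E_j\cap\supp\mu$, the block-constant output agrees, up to an error governed by the oscillation of $\phi$ on the cells, with $WM_{\phi|_{F_m}}(B_1,\ldots,B_n)W^\ast$, where $F_m=\{s_1,\ldots,s_m\}$ and $B_i$ corresponds to $A_i$ under $W$; condition $(iii)$ bounds this by $K\prod_i\|B_i\|_{S_{p_i}}=K\prod_i\|A_i\|_{S_{p_i}}$, and refining the partition so the oscillation vanishes gives the constant $K$. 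I expect the main obstacle to be exactly this step: controlling the discretization error in all norms $S_{p_i}$ simultaneously, not merely in the Hilbert--Schmidt norm where the approximation is transparent, and uniformly enough to preserve the precise constant $K$. The uniform continuity of $\phi$ on the relevant compact sets is what makes it work, while the endpoints $p_i=\infty$, where norm density fails and one argues by weak-$\ast$ approximation, need separate routine care.

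Finally, for the $(p_1,\ldots,p_n)$-$mb$ statement I would reduce to the scalar case rather than repeat the argument. Identifying $S_q^N[S_q(L_2(X))]$ with $S_q(L_2(X\times\{1,\ldots,N\}))$, a direct computation shows that the multiplicative amplification $M_\phi^{(N)}$ is again a Schur multiplier, namely $M_\phi^{(N)}=M_{\phi^{(N)}}$ with $\phi^{(N)}\bigl((t_0,k_0),\ldots,(t_n,k_n)\bigr)=\phi(t_0,\ldots,t_n)$, continuous on $(X\times\{1,\ldots,N\})^{n+1}$ and paired with $\mu$ times counting measure. The finite subsets of its support are the products $F\times\{1,\ldots,N\}$ with $F\subset\supp\mu$, on which $M_{\phi^{(N)}}$ restricts to the amplifications defining the $(p_1,\ldots,p_n)$-$mb$ norm. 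Applying the scalar equivalence to each $\phi^{(N)}$ with the same bound $K$ and taking the supremum over $N$ yields the equivalence for the $(p_1,\ldots,p_n)$-$mb$ norms.
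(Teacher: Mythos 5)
Your proposal is correct and follows essentially the same route as the paper, which handles $(i)\Leftrightarrow(ii)$ and $(ii)\Leftrightarrow(iii)$ by the arguments of \cite[Theorem 3.1]{CaspersDeLaSalle} and \cite[Theorem 1.19]{LafforgueDeLaSalle} adapted to the $(n+1)$-point kernel, and then obtains the $(p_1,\ldots,p_n)$-$mb$ case by applying the scalar statement to $X\times\{1,\ldots,N\}$ with the constant-in-the-index symbol exactly as you do. The only cosmetic difference is that you spell out the discretization/sampling estimates that the paper delegates to the cited references.
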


\begin{proof}
$(i) \Rightarrow (ii)$ is trivial. The implication $(ii) \Rightarrow (i)$ remains exactly the same as in \cite[Theorem 3.1]{CaspersDeLaSalle} except for the fact that we have to take $x_i \in S_{p_i}(L_2(X))$ and take into account the support projections of $x_1, x_1^*, \ldots, x_n, x_n^*$ when choosing $X_0$. The equivalence $(ii) \Leftrightarrow (iii)$ is mutatis mutandis the same as in \cite[Theorem 1.19]{LafforgueDeLaSalle}. For the $(p_1, \ldots, p_n)-mb$ norms, we apply the theorem on the space $X_N = X \times \{1, \dots, N\}$ and function $\phi_N((s_0, i_0), \dots, (s_n, i_n)) = \phi(s_0, \dots, s_n)$ and use the isometric identifications
\[ S_q(L_2(X_N)) \cong S_q(L_2(X) \otimes \C^N) \cong S^N_q(S_q(L_2(X))).
\]
and the fact that under these identifications we have
\[
    M_\phi^{(N)}(x_1, \dots, x_n) = M_{\phi_N}(x_1, \dots, x_n), \qquad x_i \in S_{p_i}^N(S_{p_i}(L_2(X))).
\]
\end{proof}

\subsection{Transference for $p_i = \infty$}\label{Sect=PisInfinity}

Let $G$ be a locally compact group which in this subsection is not required to be unimodular. The following   Proposition \ref{Prop=Multilinear Bozejko-Fendler} (based on \cite[Theorem 4.1]{EffrosRuanAMS})  was proved in \cite[Theorem 5.5]{TodorovTurowska}. This is a multilinear version of the Bo\.{z}ejko-Fendler result \cite{BozejkoFendler} and it yields a transference result between Fourier and Schur multipliers for the case $p_1 = \ldots = p_n = \infty$.  We give a proof of the if direction that is slightly different from  \cite{TodorovTurowska} by using the transference techniques from Theorem \ref{thm:transference}, which simplify in the current setup.

\begin{proposition} \label{Prop=Multilinear Bozejko-Fendler}
 For $\phi \in C_b(G^{\times n})$ set  \[
 \widetilde{\phi}(s_0, \ldots, s_n) = \phi(s_0 s_1^{-1}, s_1 s_2^{-2}, \ldots, s_{n-1} s_n^{-1}), \qquad s_i \in G.
 \]
Then $M_{\widetilde{\phi}}$ is multiplicatively bounded on $S_\infty(L_2(G))^{\times n} \to S_\infty(L_2(G))$  iff $T_\phi$ defines a multiplicatively bounded multilinear map on $\cL G^{\times n}
\to \cL G$. In this case, we have
\[ \|T_\phi\|_{mb} = \|M_{\widetilde{\phi}}\|_{mb} \]
\end{proposition}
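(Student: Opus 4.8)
The plan is to prove the nontrivial (``if'') implication, namely that $T_\phi$ multiplicatively bounded forces $M_{\widetilde\phi}$ multiplicatively bounded with $\|M_{\widetilde\phi}\|_{mb}\le\|T_\phi\|_{mb}$, by a single unitary–conjugation identity that realizes the multilinear Schur multiplier as a corner of the multilinear Fourier multiplier; the reverse inequality, and hence the equality and the full ``iff'', is then exactly \cite{TodorovTurowska}. The simplification advertised in the text comes from the fact that, since all exponents equal $\infty$, the normalized compressions $\mu_G(F)^{-1/p}P_F\,\cdot\,P_F$ appearing in the proof of Theorem \ref{thm:transference} degenerate to honest unitary conjugations.

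First I would invoke Theorem \ref{Thm=Finite truncation}, in its $(p_1,\ldots,p_n)$-$mb$ form with all $p_i=\infty$ (so that $S^N_\infty[E]=M_N(E)$ and $(\infty,\ldots,\infty)$-$mb$ is precisely the $mb$ norm defined in Section \ref{Sect=MultilinearNorms}), to reduce the estimate on $\|M_{\widetilde\phi}\|_{mb}$ to a uniform bound over finite subsets $F=\{s_1,\ldots,s_N\}\subset G$: it suffices to show $\|M_{\widetilde\phi}|_{F^{\times(n+1)}}\|_{mb}\le\|T_\phi\|_{mb}$ for each such $F$. On $\ell_2(F)$ I introduce the diagonal unitary $d=\sum_{s\in F}\lambda_s\otimes E_{ss}\in\cL G\otimes B(\ell_2(F))$, and for the $mb$ norm its ampliation $1_{M_m}\otimes d$.

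The heart of the matter is the intertwining identity. For $A\in B(\ell_2(F))$ one has $d(1\otimes A)d^\ast=\sum_{s,t\in F}A_{st}\,\lambda_{st^{-1}}\otimes E_{st}$, so conjugation by $d$ turns ordinary matrix inputs into $\cL G$-valued matrices built from the group elements $st^{-1}$. Setting $B_k:=d(1\otimes A_k)d^\ast$ and feeding $(B_1,\ldots,B_n)$ into the multiplicative ampliation $T_\phi^{(N)}$, I use three elementary facts: the matrix units telescope, $E_{i_0i_1}E_{i_1i_2}\cdots E_{i_{n-1}i_n}=E_{i_0i_n}$; the group elements telescope, $(s_{i_0}s_{i_1}^{-1})\cdots(s_{i_{n-1}}s_{i_n}^{-1})=s_{i_0}s_{i_n}^{-1}$; and $T_\phi(\lambda_{g_1},\ldots,\lambda_{g_n})=\phi(g_1,\ldots,g_n)\lambda_{g_1\cdots g_n}$, where by definition $\phi(s_{i_0}s_{i_1}^{-1},\ldots,s_{i_{n-1}}s_{i_n}^{-1})=\widetilde\phi(s_{i_0},\ldots,s_{i_n})$. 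Summing out the intermediate indices then yields
\[
T_\phi^{(N)}(B_1,\ldots,B_n)=\sum_{i_0,i_n} M_{\widetilde\phi}(A_1,\ldots,A_n)_{i_0 i_n}\,\lambda_{s_{i_0}s_{i_n}^{-1}}\otimes E_{i_0 i_n}=d\,\bigl(1\otimes M_{\widetilde\phi}(A_1,\ldots,A_n)\bigr)\,d^\ast,
\]
and the very same computation carried out with an extra $M_m$-leg gives $T_\phi^{(Nm)}(\cdots)=(1_{M_m}\otimes d)\bigl(1\otimes M_{\widetilde\phi}^{(m)}(A_1,\ldots,A_n)\bigr)(1_{M_m}\otimes d)^\ast$. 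Since $d$ is unitary and $A\mapsto 1\otimes A$ is a completely isometric embedding, $\|B_k\|=\|A_k\|$ in the relevant matrix norms, so $\|M_{\widetilde\phi}^{(m)}(A_1,\ldots,A_n)\|=\|T_\phi^{(Nm)}(B_1,\ldots,B_n)\|\le\|T_\phi\|_{mb}\prod_k\|A_k\|$; taking suprema over $m$ and over $F$ yields $\|M_{\widetilde\phi}\|_{mb}\le\|T_\phi\|_{mb}$.

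The step requiring genuine care is the point-mass evaluation $T_\phi(\lambda_{g_1},\ldots,\lambda_{g_n})=\phi(g_1,\ldots,g_n)\lambda_{g_1\cdots g_n}$, since $T_\phi$ is a priori defined only on $\lambda(C_c(G)\star C_c(G))$ rather than on individual $\lambda_g$. I would justify it by replacing each $\lambda_{s_i}$ in $d$ by $\lambda(u_i^\epsilon)$ for an approximate identity $u_i^\epsilon\in C_c(G)\star C_c(G)$ concentrated at $s_i$, running the now-genuine computation directly from the defining integral formula for $T_\phi$ together with the continuity of $\phi$, and passing to the limit $\epsilon\to 0$ using the boundedness of $T_\phi$ on $C^\ast_\lambda(G)$ and the strong convergence of the approximating diagonal unitaries to $d$. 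This is the only place where the multilinear bookkeeping is delicate; everything else is the telescoping identity above. Finally, combining $\|M_{\widetilde\phi}\|_{mb}\le\|T_\phi\|_{mb}$ with the reverse inequality of \cite[Theorem 5.5]{TodorovTurowska} gives both directions of the equivalence and the equality $\|T_\phi\|_{mb}=\|M_{\widetilde\phi}\|_{mb}$.
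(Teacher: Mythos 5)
Your proposal is correct and follows essentially the same route as the paper's own proof: reduction to finite sets via Theorem \ref{Thm=Finite truncation}, then the unitary-conjugation intertwining identity (your $d$ and $B_k=d(1\otimes A_k)d^\ast$ are exactly the paper's $U$ and $\pi(x)=U(x\otimes\id)U^\ast$, and your telescoping identity is the paper's $T_\phi^{(N)}\circ\pi^{\times n}=\pi\circ M_{\widetilde\phi_F}$), with matrix amplifications for the $mb$ norm and \cite{TodorovTurowska} for the converse direction. Your additional approximation argument justifying the point-mass evaluation $T_\phi(\lambda_{g_1},\ldots,\lambda_{g_n})=\phi(g_1,\ldots,g_n)\lambda_{g_1\cdots g_n}$ addresses a step the paper uses implicitly, though note that norm-boundedness on $C^\ast_\lambda(G)$ alone does not give the limit (the approximants $\lambda(u_i^\epsilon)$ converge to $\lambda_{s_i}$ only strongly/weak$^\ast$, not in norm), so one should instead appeal to the weak$^\ast$ continuity built into the hypothesis that $T_\phi$ is defined on all of $\cL G$.
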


\begin{proof}[Proof of the "if" direction]

Assume that $T_\phi$ is multiplicatively bounded. Let $F \subseteq G$ finite with $|F| = N$. Let $p_s \in B(\ell_2(F))$ be the projection on the one dimensional space spanned by the delta function $\delta_s$. Let $\widetilde{\phi}_F := \widetilde{\phi}|_{F^{\times n+1}}$. By Theorem \ref{Thm=Finite truncation} (using that $\phi$ is continuous), it suffices to prove that
\[
    M_{\widetilde{\phi}_F}: B(\ell_2(F))^{\times n} \to B(\ell_2(F))
\]
and its matrix amplifications are bounded by $\|T_\phi\|_{mb}$. Define the unitary $U = \sum_{s \in F} p_s \otimes \lambda_s \in B(\ell_2(F)) \otimes \cL G$ and the isometry
\[ \pi: B(\ell_2(F)) \to B(\ell_2(F)) \otimes \cL G, \quad \pi(x) = U(x \otimes \id)U^*. \]
Note that $\pi$ satisfies $\pi(E_{st}) = E_{st} \otimes \lambda_{st^{-1}}$. For $s_0, \dots, s_n \in F$:
\[
\begin{split}
\pi(M_{\widetilde{\phi}_F}(E_{s_0 s_1}, E_{s_1 s_2}, \ldots, E_{s_{n-1} s_n})) &= \pi( \widetilde{\phi}(s_0, \ldots, s_n)E_{s_0 s_n}) \\
&= \phi(s_0 s_1^{-1}, \ldots, s_{n-1} s_n^{-1}) E_{s_0 s_n} \otimes \lambda_{s_0 s_n^{-1}},
\end{split}
\]
while
\[
\begin{split}
    T_{\phi}^{(N)}(\pi(E_{s_0 s_1}), \ldots, \pi(E_{s_{n-1} s_n})) &= T_\phi^{(N)}(E_{s_0s_1} \otimes \lambda_{s_0 s_1}^{-1}, \ldots, E_{s_{n-1} s_n} \otimes \lambda_{s_{n-1} s_n^{-1}})\\
    &= E_{s_0 s_n} \otimes T_\phi(\lambda_{s_0 s_1^{-1}}, \ldots, \lambda_{s_{n-1} s_n^{-1}}) \\
    &= \phi(s_0 s_1^{-1}, \ldots, s_{n-1} s_n^{-1}) E_{s_0 s_n} \otimes \lambda_{s_0 s_n^{-1}}.
\end{split}
\]
It follows that $T_{\phi}^{(N)} \circ \pi^{\times n} = \pi \circ M_{\widetilde{\phi}_F}$.
 This implies that
\[
    \| M_{\widetilde{\phi}_F}\| = \| \pi \circ M_{\widetilde{\phi}_F}\|  = \|T_{\phi}^{(N)} \circ \pi\| \leq \|T_{\phi}^{(N)}\| \leq \|T_{\phi}\|_{mb}.
\]
By taking matrix amplifications, we prove similarly that $\| M_{\widetilde{\phi}_F}\|_{mb} \leq \|T_{\phi}\|_{mb}$.
\end{proof}

\begin{remark} \label{Rmk=automb}
 A multilinear map on the product of some operator spaces is multiplicatively bounded iff its linearization is completely bounded as a map on the corresponding Haagerup tensor product. However, as \cite[Lemma 3.3]{JuschenkoTodorovTurowska} shows, for Schur multipliers $M_{\widetilde{\phi}}$ on $S_\infty(L_2(G))^{\times n}$, just boundedness on the Haagerup tensor product is sufficient to guarantee that $M_{\widetilde{\phi}}$ is multiplicatively bounded. Note that even in the linear case, when $p<\infty$, it is unknown whether a bounded Schur multiplier on $S_p(L_2(\R))$ is necessarily completely bounded unless $\phi$ has continuous symbol (we refer to \cite[Conjecture 8.1.12]{Pisier98},  \cite[Theorem 1.19]{LafforgueDeLaSalle}, \cite{CaspersWildschut}).
\end{remark}


\section{Transference from Fourier to Schur multipliers} \label{Sect=Fourier->Schur}

Let $G$ be a locally compact group, which is again assumed to be unimodular. We will prove that symbols of $(p_1, \dots, p_n)$-Fourier multipliers are also symbols of $(p_1, \dots, p_n)$-Schur multipliers using a multilinear transference method. This yields a multilinear version of \cite[Theorem 4.2]{CaspersDeLaSalle}. We stipulate that in the proofs of  \cite[Section 4]{CaspersDeLaSalle} the transference is carried out for rational exponents $p$.  In order to treat the general multilinear case we present an alternative proof that transfers multipliers directly for every real exponent $p \in [1, \infty)$. In fact, this gives an improvement of \cite[Theorem 4.2]{CaspersDeLaSalle} which is stated under the stronger assumption that the multiplier acts boundedly on the Fourier algebra (equivalently is a $p$-multiplier at $p=1$).
The fundamental difference in the proof is that we base ourselves on the methods from \cite[Claim B, p. 24]{CPPR} and \cite[Lemma 4.6]{CJKM}.

As before, for   $\phi \in C_b(G^{\times n})$ we set $\widetilde{\phi} \in C_b(G^{\times n+1})$ by
\[
\widetilde{\phi}(s_0, \ldots, s_n) = \phi(s_0 s_1^{-1}, s_1 s_2^{-1}, \ldots, s_{n-1} s_n^{-1}), \qquad s_i \in G.
\]

\begin{theorem}\label{thm:transference}
Let $G$ be a locally compact second countable unimodular group and let $1 \leq p \leq \infty, 1 <  p_1, \ldots, p_n \leq \infty$  be such that $p^{-1} =  \sum_{i=1}^n p_i^{-1}$.
Let $\phi \in C_b(G^{\times n})$ be the symbol of a $(p_1, \ldots, p_n)$-multiplicatively bounded Fourier multiplier of $G$. Then $\widetilde{\phi}$ is the symbol of a $(p_1, \ldots, p_n)$-multiplicatively bounded Schur multiplier of $G$. Moreover,
\[
\begin{split}
& \Vert M_{\widetilde{\phi}}: S_{p_1}(L_2(G)) \times \ldots \times S_{p_n}(L_2(G)) \rightarrow S_{p}(L_2(G)) \Vert_{(p_1,\ldots,p_n)-mb}\\
& \qquad  \leq
\Vert T_{\phi}: L_{p_1}(\mathcal{L}G ) \times \ldots \times  L_{p_n}(\mathcal{L}G ) \rightarrow L_{p}(\mathcal{L}G ) \Vert_{(p_1,\ldots,p_n)-mb}.
\end{split}
\]
\end{theorem}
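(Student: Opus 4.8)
The starting point is the purely algebraic intertwining identity from the proof of Proposition~\ref{Prop=Multilinear Bozejko-Fendler}, namely $T_\phi^{(N)}\circ\pi^{\times n}=\pi\circ M_{\widetilde{\phi}_F}$, where $\pi(x)=U(x\otimes\id)U^\ast$ and $U=\sum_{s\in F}p_s\otimes\lambda_s$. This identity is independent of any $L_p$-structure; the content of the case $p=\infty$ was merely that $\pi$ is an isometry on $B(\ell_2(F))$. The plan is to run the same intertwining at finite $p$, the only obstruction being that the embedding $x\mapsto U(x\otimes\id)U^\ast$ is not $L_p$-bounded for non-discrete $G$: it uses $\id\in\cL G$, which has infinite Plancherel weight and hence does not lie in $L_p(\cL G)$ for $p<\infty$. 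I would first invoke Theorem~\ref{Thm=Finite truncation} to reduce the claim to a uniform bound, by $\Vert T_\phi\Vert_{(p_1,\ldots,p_n)-mb}$, on the Schur multipliers $M_{\widetilde{\phi}_F}$ (and their amplifications) attached to finite subsets $F=\{t_1,\ldots,t_N\}\subseteq G$, using that the support of Haar measure is all of $G$. Legs with $p_i=\infty$ carry the $C^\ast_\lambda(G)$-norm as in the definitions and are treated as in Proposition~\ref{Prop=Multilinear Bozejko-Fendler}.

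To remove the obstruction I would replace the singular point masses $\lambda_s$ by smeared, $L_p$-integrable surrogates and pass from $L_p(\cL G)$ to $S_p(L_2(G))$ through the compression contraction $x\mapsto\mu_G(F_0)^{-1/p}P_{F_0}xP_{F_0}$ recalled in Section~\ref{Schatten p operators}. Concretely, for a relatively compact symmetric neighbourhood $V$ of the identity with $\mu_G(V)=Q$ small enough that the translates $t_iV$ are pairwise disjoint, set $\xi_i^V=Q^{-1/2}\mathbf{1}_{t_iV}\in L_2(G)$; these are orthonormal and span a copy of $\ell_2(F)$. The rank-one operators $e_{ij}^V=|\xi_i^V\rangle\langle\xi_j^V|$ form a system of matrix units of norm $1$, and a direct Cauchy--Schwarz computation shows that $M_{\widetilde{\phi}}(e_{i_0i_1}^V,\ldots,e_{i_{n-1}i_n}^V)$ agrees with $\widetilde{\phi}(t_{i_0},\ldots,t_{i_n})\,e_{i_0i_n}^V$ up to an $S_p$-error that vanishes as $V\to e$, by continuity of $\phi$. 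On the Fourier side the surrogate for $\lambda_{t_it_j^{-1}}$ is the normalised convolver supported near $t_it_j^{-1}$; these lie in every $L_{p_i}(\cL G)$ with controlled norm, and the intertwining identity persists once $\pi$ is rebuilt from these surrogates in place of the $\lambda_s$.

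I would then assemble the estimate in three movements. First, the amplified Fourier multiplier $T_\phi^{(N)}$ is applied to the smeared convolvers inside $L_{p_i}(M_N\,\overline{\otimes}\,\cL G)$, using that the multilinear product of the smeared convolution operators reproduces, in the limit $V\to e$, the pointwise kernel product appearing in $M_{\widetilde{\phi}}$. Second, the compression contraction transports the output back to $S_p$ without increasing the norm; meanwhile the Schur-side corner embedding induced by the $\xi_i^V$ is exactly isometric, and the $L_{p_i}$-norms of the Fourier-side surrogates converge to the corresponding $S_{p_i}$-norms, so the whole estimate is asymptotically tight. Third, taking the limit over the net of neighbourhoods $V$ and combining it with the vanishing symbol discrepancy from the previous paragraph yields $\Vert M_{\widetilde{\phi}_F}\Vert\le\Vert T_\phi\Vert_{(p_1,\ldots,p_n)-mb}$; amplifying every space by an additional $M_M$, exactly as in the ``if'' direction of Proposition~\ref{Prop=Multilinear Bozejko-Fendler}, upgrades this to the multiplicatively bounded norm. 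The hypothesis $1<p_i$ enters precisely here, keeping the relevant normalisations and duality arguments under control.

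The main obstacle is the step bridging the discrete and non-discrete settings. For discrete $G$ the Plancherel weight is a tracial state, so $\lambda_s\in L_p(\cL G)$ with $\Vert\lambda_s\Vert_p=1$, the embedding $x\mapsto U(x\otimes\id)U^\ast$ is already an $L_p$-isometry, and the bound drops out of the algebraic intertwining---this is essentially the ad hoc argument of \cite[Lemma~4.6]{CJKM}. For general unimodular $G$ the delicate point, following \cite[Claim~B]{CPPR}, is to verify that the smeared convolvers have $L_{p_i}$-norms converging to the $S_{p_i}$-norms of the $e_{ij}^V$, and that the multilinear product of these approximate point masses converges, uniformly enough over the amplification parameters $N$ and $M$, to reproduce the Schur kernel product. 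It is exactly this uniformity that permits the multiplicatively bounded norm, rather than merely the norm at a single configuration, to be transferred.
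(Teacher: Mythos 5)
Your opening reduction to finite subsets $F$ via Theorem \ref{Thm=Finite truncation}, and the idea of replacing the non-$L_p$-integrable $\lambda_s$ by normalized approximate point masses, do match the paper's strategy (the paper uses $U=\sum_{s\in F}p_s\otimes\lambda_s$ together with the norm-one elements $h_\alpha^{2/q}$, where $k_\alpha=\vert U_\alpha\vert^{-1/2}\lambda(1_{U_\alpha})=u_\alpha h_\alpha$ is a polar decomposition). However, the proposal has a genuine gap at its centre: the assertion that ``the multilinear product of the smeared convolution operators reproduces, in the limit, the pointwise kernel product appearing in $M_{\widetilde\phi}$'' --- concretely, that $T_{\phi}(\lambda_{r_0}h_\alpha^{2/p_1}\lambda_{r_1}^\ast,\ldots,\lambda_{r_{n-1}}h_\alpha^{2/p_n}\lambda_{r_n}^\ast)$ converges in $L_p(\cL G)$ to $\phi(r_0r_1^{-1},\ldots,r_{n-1}r_n^{-1})\,\lambda_{r_0}h_\alpha^{2/p}\lambda_{r_n}^\ast$ --- is stated but not proved, and it is the entire analytic content of the theorem. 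This is not a support computation: to obtain norm-one elements of $L_{p_j}(\cL G)$ one must pass to the fractional power $h_\alpha^{2/p_j}$ of the absolute value of the approximate identity, and taking absolute values and fractional powers destroys the compact support of the convolution kernel, so your surrogates are not literally ``supported near $t_it_j^{-1}$'' and the heuristic kernel-product argument does not apply. The paper devotes the bulk of its proof to exactly this point: it regularizes the symbol to $\phi_k$ by the coordinatewise convolution \eqref{Eqn=Convolution}, introduces the positive definite cutoff $\zeta$ and the decomposition into $A_{k,\alpha}+B_{k,\alpha}+C_{k,\alpha}$, and establishes the limits \eqref{Eqn=Limits} via \cite[Lemma 4.6]{CJKM}. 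That is also where the hypothesis $p_j>1$ genuinely enters (through \cite[Proposition 3.9]{CJKM}), not in ``normalisations and duality arguments'' as you suggest; indeed the paper's remark after the proof notes that the case $p_i=p=1$ is open precisely because those limits are not known there. None of this machinery, nor any symbol regularization, appears in your outline.

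A secondary problem is your route back to $S_p(L_2(G))$ through the compression $x\mapsto\mu_G(F_0)^{-1/p}P_{F_0}xP_{F_0}$. First, the compression does not exactly intertwine $T_\phi$ with $M_{\widetilde\phi}$, since the intermediate integration variables are unrestricted on the Fourier side; in Section \ref{Sect=Schur->Fourier} this error is controlled only via the F\o lner condition, and with your non-localized surrogates (after fractional powers) the error is again of the hard type above. Second, $P_{F_0}\lambda(f_{ij})P_{F_0}$ sees every pair $(k,l)$ with $t_kt_l^{-1}$ near $t_it_j^{-1}$, so without a generic-position assumption on $(i,j)\mapsto t_it_j^{-1}$ you do not recover a single matrix unit. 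The paper sidesteps both issues by never compressing: it reads off $\Vert M_{\widetilde{\phi_k}}(a_1,\ldots,a_n)\Vert_{S_p^N}$ as $\limsup_\alpha\Vert U(M_{\widetilde{\phi_k}}(a_1,\ldots,a_n)\otimes h_\alpha^{2/p})U^\ast\Vert_{S_p^N\otimes L_p(\cL G)}$, keeping the matrix index in a separate tensor leg. Note finally that the compression-based argument of \cite[Section 4]{CaspersDeLaSalle}, which your proposal essentially revives, is explicitly flagged by the authors as working only for rational exponents --- which is the stated reason they adopted the different route.
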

\begin{proof}

Let $F \subseteq G$ be finite. Consider the Hilbert space $\ell_2(F)$ and for $s \in F$ let $p_s$ be the orthogonal projection onto the one dimensional space spanned by the delta function $\delta_s$. By Theorem \ref{Thm=Finite truncation} it suffices to show that the norm of
\[
M_{\widetilde{\phi}}: \cS_{p_1}(\ell_2(F)) \times \ldots \times  \cS_{p_n}(\ell_2(F)) \rightarrow  \cS_{p}(\ell_2(F)).
\]
and its matrix amplifications are bounded by $\Vert   T_\phi \Vert_{(p_1,\ldots,p_n)-mb}$.

The proof requires the introduction of coordinate-wise convolutions as follows. Fix functions $\varphi_k \in A(G)$  such that $\varphi_k \geq 0$, $\Vert \varphi_k \Vert_{L_1(G)} = 1$ and such that the support of $\varphi_k$ shrinks to the identity of $G$; from \eqref{Eqn=FourierAlg} it is clear that such functions exist. Then for any function $\phi \in C_b( G^{\times n} )$ we set
\begin{equation}\label{Eqn=Convolution}
\begin{split}
& \phi_k(s_1, \ldots, s_n) \\
 := & \int_{G^{\times n}}  \phi(t_1^{-1} s_1 t_2, t_2^{-1} s_2 t_3, \ldots  ,  t_{n-2}^{-1} s_{n-2} t_{n-1}   ,  t_{n-1}^{-1} s_{n-1},   s_n  t_{n}^{-1} ) \left( \prod_{j=1}^{n} \varphi_k(t_j) \right)   dt_1 \ldots dt_n 
\end{split}
\end{equation}
For the particular case $n = 1$ this expression becomes by definition
\[
\begin{split}
\phi_k(s)
 := & \int_{G}  \phi(  s  t^{-1} )  \varphi_k(t)     dt
    =  \int_{G }  \phi(  t^{-1} )
  \varphi_k(t s)  dt.
\end{split}
\]

Let $(U_\alpha)_\alpha$ be a symmetric neighbourhood basis of the identity of $G$ consisting of relatively compact sets.
  Set
\[
k_\alpha = \vert U_\alpha \vert^{- \frac{1}{2}} \lambda(1_{U_{\alpha}}),
\]
with polar decomposition $k_\alpha = u_\alpha h_\alpha$.
Then $k_\alpha$ is an element in $L_2(\cL G)$ with $\Vert k_\alpha \Vert_2 = 1$. Consequently $h_\alpha^{2/q}$ is   in $L_q(\cL G)$ for $1 \leq q < \infty$ with $\Vert h_\alpha^{2/q} \Vert = 1$. In case $q = \infty$ by mild abuse of notation we set $h_\alpha^{2/q} =1$.  Set the unitary
\[
U = \sum_{s \in F}  p_s \otimes \lambda_s \in B(\ell_2(F)) \otimes \cL G.
\]
Now let $a_i \in S_{p_i}(\ell_2(F))$.
 Since $\phi_k$ converges to $\phi$ pointwise, we have
 \[
 M_{\widetilde{\phi}}(a_1, \ldots, a_n)=\lim_k M_{\widetilde{\phi_k}}(a_1, \ldots, a_n).
 \]
  So with $N = \vert F \vert$,
\begin{equation}\label{Eqn=TransferenceEstimate}
\begin{split}
 \Vert M_{\widetilde{\phi}}(a_1, \ldots, a_n ) \Vert_{S_p^N}
= &  \lim_k \limsup_\alpha \Vert M_{\widetilde{\phi_k}}(a_1, \ldots, a_n ) { \otimes h_\alpha^{\frac2p}} \Vert_{S_p^N{ \otimes L_p(\cL G)}} \\
= & \lim_k \limsup_\alpha \Vert U   ( M_{\widetilde{\phi_k}}(a_1, \ldots, a_n ) \otimes h_\alpha^{\frac{2}{p}} ) U^\ast \Vert_{S_p^N \otimes L_p(\cL G)} \\
\leq & \; A+ B,
\end{split}
\end{equation}
where

\begin{equation}
\begin{split}
&A =  \limsup_k   \limsup_\alpha \Vert  T_{\phi_k}^{ (N) }(  U   ( a_1 \otimes h_\alpha^{\frac{2}{p_1}}  ) U^\ast, \ldots,   U  ( a_n \otimes h_\alpha^{\frac{2}{p_n}}  ) U^\ast   )    \Vert_{S_p^N \otimes L_p(\cL G)}, \\
&B =\limsup_k \limsup_\alpha  \Vert  T_{\phi_k}^{ (N) }(  U   ( a_1 \otimes h_\alpha^{\frac{2}{p_1}}  ) U^\ast, \ldots,    U   ( a_n \otimes h_\alpha^{\frac{2}{p_n}}  ) U^\ast   ) -  \\
&  \hspace{6cm} U   ( M_{\widetilde{\phi_k}}(a_1, \ldots,  a_n) \otimes h_\alpha^{\frac{2}{p}} ) U^\ast    \Vert_{S_p^N \otimes L_p(\cL G)}.
\end{split}
\end{equation}

Below we prove that $B=0$. Therefore,
\begin{equation}\label{Eqn=CrucialEstimate}
\begin{split}
\Vert M_{\widetilde{\phi}} & (a_1, \ldots, a_n ) \Vert_{S_p^N}  \leq  \limsup_k \limsup_\alpha  \Vert  T_{\phi_k}^{(N)}(  U   ( a_1 \otimes h_\alpha^{\frac{2}{p_1}}  ) U^\ast, \ldots,    U   ( a_n \otimes h_\alpha^{\frac{2}{p_n}}  ) U^\ast   )    \Vert_{S_p^N \otimes L_p(\cL G)} \\
\leq &  \limsup_k  \limsup_\alpha  \Vert T_{\phi_k}^{(N)} \Vert \Vert  U   ( a_1 \otimes h_\alpha^{\frac{2}{p_1}}  ) U^\ast \Vert_{S_{p_1}^N \otimes L_{p_1}(\cL G)} \ldots
   \Vert  U   ( a_n \otimes h_\alpha^{\frac{2}{p_n}}  ) U^\ast \Vert_{S_{p_n}^N \otimes L_{p_n}(\cL G)}\\
=&  \limsup_k    \Vert T_{\phi_k}^{(N)} \Vert \Vert a_1 \Vert_{S_{p_1}^N} \ldots \Vert a_n \Vert_{S_{p_n}^N},
\end{split}
\end{equation}
where the norm of $T_{\phi_k}^{(N)}$ is understood as in \eqref{Eqn=NAmplify} for the map $T_{\phi_k}: L_{p_1}(\cL G) \times \ldots \times  L_{p_n}(\cL G) \rightarrow L_{p}(\cL G)$.  By \cite[Lemma 4.3]{CJKM} and the fact that $\Vert \varphi_k \Vert_{L_1(G)} = 1$ it follows then that $\Vert T_{\phi_k}^{(N)} \Vert \leq \Vert T_{\phi}^{(N)}\Vert$. Hence 
\begin{equation}\label{Eqn=ConvApprox}
\Vert M_{\widetilde{\phi}}(a_1, \ldots, a_n ) \Vert_{S_p^N}  \leq   \Vert T_{\phi}^{(N)} \Vert \Vert a_1 \Vert_{S_{p_1}^N} \ldots \Vert a_n \Vert_{S_{p_n}^N}.
\end{equation}
This finishes the proof. The multiplicatively bounded case follows by taking matrix amplifications.

\vspace{0.3cm}

Now let us prove that the last term in \eqref{Eqn=TransferenceEstimate} goes to 0. By the triangle inequality it suffices to prove that the limits of the following terms are 0. For $r_0, \ldots, r_n \in F$  with matrix units $E_{r_i, r_{i+1}}$,
\begin{equation}\label{Eqn=TransferenceFinal}
\begin{split}
&    T_{\phi_k}^{ (N) }(  U   ( E_{r_0,r_1} \otimes h_\alpha^{\frac{2}{p_1}}  ) U^\ast, \ldots,   U   ( E_{r_{n-1},r_n} \otimes h_\alpha^{\frac{2}{p_n}}  ) U^\ast   ) -  U   ( M_{\widetilde{\phi_k}}(E_{r_0,r_1}, \ldots,  E_{r_{n-1}, r_{n}}) \otimes h_\alpha^{\frac{2}{p}} ) U^\ast  \\
& \quad =      E_{r_0, r_n} \otimes \left(  T_{\phi_k}( \lambda_{r_0} h_\alpha^{\frac{2}{p_1}}  \lambda_{r_1}^\ast, \ldots, \lambda_{r_{n-1}}  h_\alpha^{\frac{2}{p_n}} \lambda_{r_n}^\ast ) - \phi_k(r_0 r_1^{-1}, \ldots,  r_{n-1} r_n^{-1})  \lambda_{r_0} h_\alpha^{\frac{2}{p}} \lambda_{r_n}^\ast\right).
\end{split}
\end{equation}
Applying the transformation formula \cite[Lemma 4.3]{CJKM} to the $T_{\phi_k}$ term,
\[
T_{\phi_k}( \lambda_{r_0} h_\alpha^{\frac{2}{p_1}}  \lambda_{r_1}^\ast, \ldots, \lambda_{r_{n-1}}  h_\alpha^{\frac{2}{p_n}} \lambda_{r_n}^\ast) = \lambda_{r_0} T_{\phi_k(r_0 \: \cdot \: r_1^{-1}, \ldots,  r_{n-1}   \: \cdot \: r_{n}^{-1} )  }( h_\alpha^{\frac{2}{p_1}}, \ldots,  h_\alpha^{\frac{2}{p_n}}  ) \lambda_{r_n}^\ast. 
\]
Taking the norm of the expression in equation \eqref{Eqn=TransferenceFinal},

\begin{equation}\label{Eqn=TransferenceFinal2}
\begin{split}
\Vert \eqref{Eqn=TransferenceFinal} \Vert_{S_p^N \otimes L_p(\cL G)}
= &  \Vert T_{\phi_k(r_0 \: \cdot \: r_1^{-1}, \ldots,  r_{n-1}   \: \cdot \: r_{n}^{-1} )  }( h_\alpha^{\frac{2}{p_1}}, \ldots,  h_\alpha^{\frac{2}{p_n}}  ) - \phi_k(r_0 r_1^{-1}, \ldots,  r_{n-1} r_n^{-1})  h_\alpha^{\frac{2}{p}} \Vert_{L_p(\cL G)}.
\end{split}
\end{equation}

  We now claim that $\lim_k \limsup_\alpha$ of this expression yields 0, by almost identical arguments as those used in \cite[Lemma 4.6]{CJKM}. Since we have a couple of differences, namely that we have a translated function $\phi_k(r_0 \: \cdot \: r_1^{-1}, \ldots,  r_{n-1}   \: \cdot \: r_{n}^{-1} )$ and we do not use the SAIN condition (see \cite[Definition 3.1]{CPPR}), we spell out some of the details here.

Let $\zeta:G \to \R_{\geq 0}$ be a continuous compactly supported positive definite function in $A(G)$ with $\zeta(e)=1$, so that $T_\zeta$ is contractive. For $1\leq j\leq n$, let $\zeta_j(s)=\zeta(r_{j-1}^{-1} s r_j)$ and let $\phi(\zeta_1,\ldots,\zeta_n)(s_1,\ldots,s_n)= \phi(s_1,\ldots,s_n)\zeta_1(s_1)\ldots\zeta_n(s_n)$. Then
\[
\begin{split}
 &\Vert T_{\phi_k(r_0 \: \cdot \: r_1^{-1}, \ldots,  r_{n-1}   \: \cdot \: r_{n}^{-1} )  }( h_\alpha^{\frac{2}{p_1}}, \ldots,  h_\alpha^{\frac{2}{p_n}}  ) - \phi_k(r_0 r_1^{-1}, \ldots,  r_{n-1} r_n^{-1})  h_\alpha^{\frac{2}{p}} \Vert_{L_p(\cL G)}   \\
& \leq \Vert \left( (\phi(\zeta_1,\ldots,\zeta_n))_k(r_0 r_1^{-1}, \ldots,  r_{n-1} r_{n}^{-1}) -  \phi_k(r_0 r_1^{-1}, \ldots,  r_{n-1} r_n^{-1}) \right)  h_\alpha^{\frac{2}{p}} \Vert_{L_p(\cL G)} \\
  &+  \Vert T_{(\phi(\zeta_1,\ldots,\zeta_n))_k(r_0 \: \cdot \: r_1^{-1}, \ldots,  r_{n-1}   \: \cdot \: r_{n}^{-1} )  }( h_\alpha^{\frac{2}{p_1}}, \ldots,  h_\alpha^{\frac{2}{p_n}}  ) - (\phi(\zeta_1,\ldots,\zeta_n))_k(r_0 r_1^{-1}, \ldots,  r_{n-1} r_n^{-1})  h_\alpha^{\frac{2}{p}} \Vert_{L_p(\cL G)} \\
  &+  \Vert T_{(\phi(\zeta_1,\ldots,\zeta_n))_k(r_0 \: \cdot \: r_1^{-1}, \ldots,  r_{n-1}   \: \cdot \: r_{n}^{-1} )  }( h_\alpha^{\frac{2}{p_1}}, \ldots,  h_\alpha^{\frac{2}{p_n}}  ) - T_{\phi_k(r_0 \: \cdot \: r_1^{-1}, \ldots,  r_{n-1}   \: \cdot \: r_{n}^{-1} )  }( h_\alpha^{\frac{2}{p_1}}, \ldots,  h_\alpha^{\frac{2}{p_n}}  ) \Vert_{L_p(\cL G)} \\
  &= A_{k,\alpha} + B_{k,\alpha} + C_{k,\alpha}.
\end{split}
\]
The terms  $A_{k,\alpha}$, $B_{k,\alpha}$ and $C_{k,\alpha}$ should be compared to the terms occurring in \cite[Eqn. (4.7)]{CJKM}.
Since $\varphi_k$ have support shrinking to $e$, and $\zeta_j(r_{j-1}r_j^{-1})=1$, it follows that $\lim_k A_{k,\alpha} = 0$.

For fixed $t_1,\ldots, t_n \in G$, define
\[ C_{\alpha}(t_1,\ldots,t_n):=\Vert T_{\phi-\phi(\zeta_1,\ldots,\zeta_n)(t_1^{-1}r_0 \: \cdot \: r_1^{-1}t_2, \ldots, t_{n-1}^{-1} r_{n-2} \: \cdot \: r_{n-1}^{-1}, r_{n-1} \: \cdot \: r_n^{-1} t_n^{-1})}(h_\alpha^{\frac{2}{p_1}},\ldots,h_\alpha^{\frac{2}{p_n}}) \Vert_{L_p(\cL G)}
\]
and for $1\leq j \leq n-2$,
\[
\begin{split}
y_{j, \alpha} &= \lambda_{t_j^{-1} r_{j-1}} h_\alpha^{\frac{2}{p_j}} \lambda_{r_j^{-1}t_{j+1}},  \\
y_{n-1, \alpha} &= \lambda_{t_{n-1}^{-1} r_{n-2}} h_\alpha^{\frac{2}{p_{n-1}}} \lambda_{r_{n-1}^{-1}}, \\
y_{n, \alpha} &= \lambda_{r_{n-1}} h_\alpha^{\frac{2}{p_n}} \lambda_{r_n^{-1} t_n^{-1}}.
\end{split}
\]

By the same arguments as in  \cite[Lemma 4.6, after Eqn. (4.8)]{CJKM}, we get for $1 \leq j \leq n-2$:
\begin{equation}\label{Eqn=Limits}
\begin{split}
 \lim_{\alpha }  \Vert (T_{\zeta_j }- \id)( y_{j, \alpha} ) \Vert_{L_{p_j}(\mathcal{L}G )} = &
 \vert  \zeta( r_{j-1}^{-1} t_j^{-1} r_{j-1} r_j^{-1} t_{j+1} t_j    )  - 1 \vert, \\
  \lim_{\alpha }  \Vert (T_{\zeta_{n-1} }- \id)( y_{n-1, \alpha} ) \Vert_{L_{p_{n-1}}(\mathcal{L}G )} = &
 \vert  \zeta( r_{n-2}^{-1} t_{n-1}^{-1}   r_{n-2}  )  - 1 \vert, \\
  \lim_{\alpha }  \Vert (T_{\zeta_n }- \id)( y_{n, \alpha} ) \Vert_{L_{p_n}(\mathcal{L}G )} = &
 \vert  \zeta(   r_n^{-1}  t_{n}^{-1} r_n    )  - 1 \vert.
 \end{split}
 \end{equation}
 Crucially, here we require this argument from \cite[Lemma 4.6, after Eqn. (4.8)]{CJKM} only for $x_j=1$. Hence we do not require the use of \cite[Lemma 3.15]{CJKM}, which uses the SAIN condition. Further note that in this case, the above equalities are trivially true when $p_j=\infty$ (as $h_\alpha^{2/p_j} = 1$ in that case), so we do not require the use of \cite[Proposition 3.9]{CJKM}, which holds only for $1<p_j<\infty$.
 Integrating $C_{\alpha}(t_1,\ldots,t_n)$ against $\prod_{j=1}^n \varphi_k (t_j)$, we can show that $\lim_k \limsup_\alpha C_{k,\alpha} = 0$, just as in \cite[Lemma 4.6]{CJKM}.

 To show that $\lim_k \lim_\alpha B_{k,\alpha}=0$, we only note the modifications from \cite[Lemma 4.6]{CJKM}. As before,  $x_j=1$ in the proof of \cite[Lemma 4.6]{CJKM}. The operators $T_j$ appearing in that proof are to be replaced with $T_{\varphi_k (r_{j-1} \: \cdot \: r_{j}^{-1} t_j)}$ for $1\leq j \leq n-1$ and $T_{n}$ is $T_{\varphi_k(t_n r_{n-1} \: \cdot \: r_n^{-1})}$. Since all $x_j$ are $1$, the term $S_{j+1}(x_{j+1} S_{j+2}(x_{j+2} \ldots S_{n-1}(x_{n-1}) \ldots ))$ in the definition of $R_{j,V}$ that appears in  \cite[Lemma 4.6]{CJKM} is now just the scalar
 \[
 \prod_{i=j+1}^{n-1} \varphi_k (r_{i-1} r_{i}^{-1} t_i).
 \]
 Now, in equation (4.13) of \cite{CJKM}, the commutator terms vanish, as one of the terms in every case is a scalar. Additionally, since the $\widetilde{S_j}$ in the estimate for the first summand in equation (4.14) of \cite{CJKM} is a scalar, we can once more avoid \cite[Lemma 3.15]{CJKM} and the SAIN condition.   Once again, note that since $x_j=1$, the proof remains valid even when some of the $p_j=\infty$.
\end{proof}

\begin{remark}
Theorem \ref{thm:transference} assumes $G$ to be second countable since its proof relies on \cite{CJKM} which assumes second countability.
\end{remark}

\begin{remark}
Fix $1 \leq i \leq n$.  In case $p_i = p  = 1$ and $p_j = \infty$ for all $1 \leq j \leq n, i \not = j$ we do not know whether Theorem \ref{thm:transference} holds. The reason is that we do not know whether the limits \eqref{Eqn=Limits} (at index $i$) hold and neither do we know if the two further applications of Proposition 3.9 in \cite[Lemma 4.6, proof]{CJKM} hold.
\end{remark}

\section{Amenable groups: transference from Schur to Fourier multipliers} \label{Sect=Schur->Fourier}

Recall \cite[Section G]{BekkaHarpeValette} that   $G$ is amenable iff it satisfies the F\o lner condition: for any $\epsilon>0$ and any compact set $K\subseteq G$, there exists a compact set $F$ with non-zero measure such that $\frac{\mu_G(s.F \Delta F)}{\mu_G(F)} < \epsilon$ for all $s\in K$. Here $\Delta$ is the symmetric difference. This allows us to construct a net $F_{(\epsilon,K)}$ of such F\o lner sets using the ordering $( \epsilon_1, K_1) \leq (\epsilon_2,K_2)$ if $\epsilon_1 \geq \epsilon_2, K_1 \subseteq K_2$.

\begin{theorem}\label{thm:amenable-intertwining}
Let $G$ be a locally compact, unimodular, amenable group and let $1\leq p,p_1,\ldots,p_n \leq \infty$ be such that $p^{-1}=\sum_{i=1}^n p_i^{-1}$. Let $\phi \in C_b(G^{\times n})$ be such that $\widetilde{\phi}$ is the symbol of a $(p_1,\ldots,p_n)$-Schur multiplier of $G$. Then there is an ultrafilter $\cU$ on a set $I$ and there are complete contractions (resp. complete isometries if $1 < q < \infty$)   $i_q: L_q(\cL G) \to \prod_{\cU} S_q(L_2(G))$ such that, for all $f_i,f \in C_c(G)\star C_c(G)$,
\begin{equation}\label{Eqn=AmenableIntertwining}
\langle i_p(T_\phi(x_1,\ldots,x_n)),i_{p'}(y^*) \rangle_{p,p'} = \left \langle \left( M_{\widetilde{\phi}}( i_{p_1,\alpha}(x_1),\ldots,i_{p_n,\alpha}(x_n) ) \right)_{\alpha \in I} , i_{p'}(y^*) \right \rangle_{p,p'},
\end{equation}
where $x_i = \lambda(f_i), y = \lambda(f)$, and $\frac{1}{p'}+\frac{1}{p}=1$. In a similar way, the matrix amplifications of $i_q$ intertwine the multiplicative amplifications of the Fourier and Schur multipliers.
\end{theorem}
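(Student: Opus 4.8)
The plan is to construct the maps $i_q$ exactly as in the linear case of \cite[Theorem 5.2]{CaspersDeLaSalle} and then verify the multilinear intertwining \eqref{Eqn=AmenableIntertwining} by a direct computation which, via the F\o lner condition, reduces to an identity of explicit integrals. First I would fix a F\o lner net $(F_\alpha)_{\alpha \in I}$ as in the paragraph preceding the statement, indexed by the directed set $I$ of pairs $(\epsilon, K)$, and choose an ultrafilter $\cU$ on $I$ refining the order filter of tails, so that ultralimits along $\cU$ recover limits along the net. For each $\alpha$ and each relevant exponent $q$ set
\[
i_{q,\alpha}(x) = \mu_G(F_\alpha)^{-1/q} P_{F_\alpha} x P_{F_\alpha}, \qquad x \in L_q(\cL G),
\]
a complete contraction $L_q(\cL G) \to S_q(L_2(G))$ by \cite[Theorem 5.1]{CaspersDeLaSalle}, and let $i_q = (i_{q,\alpha})_\alpha$ be the induced map into the operator space ultraproduct $\prod_\cU S_q(L_2(G))$. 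That $i_q$ is a complete contraction, and a complete isometry for $1<q<\infty$, is proved verbatim as in \cite[Theorem 5.2]{CaspersDeLaSalle}: on the dense set $\lambda(C_c(G)\star C_c(G))$ one has $\Vert i_{q,\alpha}(\lambda(f))\Vert_{S_q} \to \Vert \lambda(f)\Vert_{L_q(\cL G)}$ along the net, again a consequence of the F\o lner property.

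Next I would reduce \eqref{Eqn=AmenableIntertwining} to a net statement. Since the ultraproduct pairing is the ultralimit of the pairings of representatives, it suffices to show
\[
\lim_{\alpha \to \cU} \Big( \langle i_{p,\alpha}(T_\phi(x_1,\ldots,x_n)), i_{p',\alpha}(y^*)\rangle - \langle M_{\widetilde{\phi}}(i_{p_1,\alpha}(x_1),\ldots,i_{p_n,\alpha}(x_n)), i_{p',\alpha}(y^*)\rangle \Big) = 0.
\]
Here I would use that for $f \in C_c(G)\star C_c(G)$ the operator $\lambda(f)$ has Hilbert--Schmidt kernel $(t,u)\mapsto f(tu^{-1})$, so $i_{q,\alpha}(\lambda(f))$ has kernel $\mu_G(F_\alpha)^{-1/q}1_{F_\alpha}(t)f(tu^{-1})1_{F_\alpha}(u)$, together with the fact that the trace pairing of two Hilbert--Schmidt operators is the integral of the product of their kernels against the flip. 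Both pairings then become explicit $(n+1)$-fold integrals carrying a factor $\mu_G(F_\alpha)^{-1}$ (since $\sum_i p_i^{-1}+(p')^{-1}=1$) and a product of indicators $\prod_{j=0}^n 1_{F_\alpha}(t_j)$.

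The core computation is the change of variables $r_i = t_{i-1}t_i^{-1}$, $i=1,\ldots,n$, whose Jacobian is trivial by unimodularity; it turns the Schur pairing into
\[
\mu_G(F_\alpha)^{-1}\!\int_{G^n}\!\! \phi(r_1,\ldots,r_n) \prod_{i=1}^n f_i(r_i)\, \overline{f(r_1\cdots r_n)} \Big( \int_G 1_{F_\alpha}(t_0)\prod_{j=1}^n 1_{F_\alpha}(r_j^{-1}\cdots r_1^{-1}t_0)\, dt_0\Big) dr_1\cdots dr_n,
\]
while the Fourier pairing, after writing $T_\phi(x_1,\ldots,x_n)=\lambda(g)$ with $g$ the pushforward of $\phi\prod_i f_i$ under multiplication, collapses to the same integrand but with the single overlap factor $\mu_G(F_\alpha)^{-1}\mu_G(F_\alpha \cap (r_1\cdots r_n)F_\alpha)$. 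The F\o lner condition gives that each inner normalized overlap tends to $1$, \emph{uniformly} for $r_1,\ldots,r_n$ ranging over the compact supports of the $f_i$; since $\phi$ is bounded and the $f_i,f$ are compactly supported, dominated convergence lets me pass the limit inside, and both pairings converge along the net --- hence along $\cU$ --- to the common value $\int_{G^n}\phi(r_1,\ldots,r_n)\prod_i f_i(r_i)\overline{f(r_1\cdots r_n)}\,dr$. This proves \eqref{Eqn=AmenableIntertwining}.

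Finally, the statement about amplifications follows by running the same computation with every object tensored by $M_N$, using the identifications $S_q^N[S_q(L_2(G))]\cong S_q(L_2(G)\otimes\C^N)$ from Section \ref{Schatten p operators} and the definitions of $T_\phi^{(N)}$ and $M_{\widetilde{\phi}}^{(N)}$; the matrix entries only carry the scalar multiplier structure through, so the F\o lner estimate is unchanged. I expect the main obstacle to be the uniform F\o lner convergence together with the justification of interchanging the limit with the $G^n$-integration --- this is exactly where amenability is used, and where the compact support of the $f_i$ and the continuity and boundedness of $\phi$ enter --- rather than the bookkeeping of the change of variables or the construction of the $i_q$, which is inherited from the linear case.
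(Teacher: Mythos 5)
Your proposal is correct and follows essentially the same route as the paper: the same F\o lner-net construction of $i_{q,\alpha}(x)=\mu_G(F_\alpha)^{-1/q}P_{F_\alpha}xP_{F_\alpha}$ citing \cite[Theorems 5.1, 5.2]{CaspersDeLaSalle}, the same kernel computations for $P_{F_\alpha}\lambda(f)P_{F_\alpha}$, and the same change of variables reducing everything to F\o lner overlap estimates uniform over the compact supports of the $f_i$. The only cosmetic difference is that you show both pairings converge to the common limit $\int_{G^n}\phi\prod_i f_i\,\overline{f(r_1\cdots r_n)}\,dr$, whereas the paper bounds the difference of the two pairings directly by $\mu_G(F_\alpha)^{-1}\int\Phi\Psi_\alpha$ and estimates the support of $\Psi_\alpha$; this is the same F\o lner argument packaged differently.
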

\begin{proof}
Let $F_\alpha, \alpha \in I$ be a F\o lner net for $G$, where $I$ is the index set consisting of pairs $(\epsilon, K)$ for $\epsilon > 0$, $K \subseteq G$ compact. It has the ordering as described above. Let $P_\alpha=P_{F_\alpha}$ be the projection onto $L_2(F_\alpha)$. Let $\cU$ be an ultrafilter refining the net $I$, and consider the map $i_p: L_p(\cL G)\to \prod_\cU S_p(L_2(G))$ defined by $i_p(x) = (i_{p,\alpha}(x))_{\alpha \in I} = (\frac{1}{\mu_G(F_\alpha)^{1/p}}P_{\alpha}x P_{\alpha})_{\alpha \in I}$. From \cite[Theorem 5.1]{CaspersDeLaSalle}, $i_p$ is a complete contraction (and even a complete isometry for $1< p < \infty$ \cite[Theorem 5.2]{CaspersDeLaSalle}); here the F\o lner condition is used.

Fix $\alpha$ and let $f \in C_c(G)\ast C_c(G)$. We first observe that by \cite[Theorem 5.1]{CaspersDeLaSalle} applied to the bounded operator $x = \lambda(f)$ we have $P_{F_{\alpha}} \lambda(f) P_{F_\alpha} \in S_q(L_2(G))$ for all $1\leq q \leq \infty$ and the kernel of this operator is given by the function

\[
(s,t) \mapsto 1_{F_\alpha}(s) f(st^{-1}) 1_{F_\alpha}(t).
\]
So we have
\begin{align*}
M_{\widetilde{\phi}}  (i_{p_1, \alpha}(x_1),\ldots,i_{p_n, \alpha}(x_n)) (t_0,t_{n}) & \\ =
\frac{1}{\mu_G(F_\alpha)^{1/p}} 1_{F_\alpha}(t_0)1_{F_\alpha}(t_n) \int_{F_\alpha^{\times n-1}} & \phi(t_0t_1^{-1},\ldots, t_{n-1}t_n^{-1}) f_1(t_0t_1^{-1})\ldots f_n(t_{n-1}t_n^{-1})  dt_1 \ldots dt_{n-1}.
\end{align*}
Moreover, after some change of variables, we see that $P_{F_\alpha} T_\phi(x_1,\ldots,x_n)P_{F_\alpha}$ is given by the kernel
\begin{align*}
(t_0,t_n) \mapsto 1_{F_\alpha}(t_0)  \int_{G^{\times n-1}}  \phi(t_0t_1^{-1},\ldots,t_{n-1}t_n^{-1}) &f_1(t_0t_1^{-1})\ldots f_n(t_{n-1}t_n^{-1})  1_F(t_n) dt_1\ldots dt_{n-1}.
\end{align*}
Let $\Phi$ denote the function
\[
 \Phi(t_0,\ldots,t_n) = \phi(t_0t_1^{-1},\ldots,t_{n-1}t_n^{-1}) f_1(t_0t_1^{-1})\ldots f_n(t_{n-1}t_n^{-1}) f(t_n t_0^{-1}),
\]
 and let $\Psi_\alpha$ be defined by
\[
\Psi_\alpha(t_0,\ldots, t_n) = 1_{F_\alpha}(t_0) 1_{F_\alpha} (t_n) - 1_{F_\alpha^{\times n+1}}(t_0,\ldots, t_n).
\]

Let $K$ be some compact set such that $\supp(f_j),\supp(f)\subseteq K$.  Let $t_0, \dots, t_n$ be such that both $\Phi(t_0, \dots, t_n)$ and $\Psi_\alpha(t_0, \dots, t_n)$ are nonzero. Since $\Psi_\alpha(t_0, \dots, t_n)$ is nonzero, we must have $t_0, t_n \in F_\alpha$ and $t_1, \ldots ,t_{n-1} \notin F_\alpha$. Since $\Phi(t_0, \dots, t_n)$ is nonzero, there are $k_1,\ldots,k_n \in K$ such that $t_{n-1} = k_n t_n,\ t_{n-2} = k_{n-1}k_n t_n,\ \ldots  ,\ t_0 = k_1 \ldots k_n t_n$. Hence we find that $t_n$ belongs to the set
\[
F_\alpha \cap F_\alpha.(k_1\ldots k_n)^{-1} \setminus \left( F_\alpha.(k_2\ldots k_n)^{-1} \cup \ldots \cup F_\alpha.k_n^{-1} \right).
\]

Using these facts, along with some change of variables, we get
\begin{equation} \label{Eqn=UltraIntertwining}
\begin{split}
&\vert \langle i_{p,\alpha}(T_\phi(x_1,\ldots,x_n)),i_{p',\alpha}(y^*) \rangle_{p,p'} - \langle M_{\widetilde{\phi}}( i_{p_1,\alpha}(x_1),\ldots,i_{p_n,\alpha}(x_n) )  , i_{p',\alpha}(y^*) \rangle_{p,p'} \vert \\
&= \left\vert \frac{1}{\mu_G(F_\alpha)} \int_{G^{n+1}} \Phi(t_0,\ldots,t_n) \Psi_\alpha(t_0,\ldots,t_n) dt_0 \ldots dt_n \right\vert \\
&= \left\vert \frac{1}{\mu_G(F_\alpha)} \int_{K^{n}} \int_{G} \Phi(k_1\ldots k_n t_n, \ldots , k_n t_n, t_n)  \Psi_\alpha(k_1\ldots k_nt_n,\ldots ,k_nt_n, t_n)dk_1 \ldots dk_n dt_n \right\vert \\
&\leq \| \Phi \|_\infty \int_{K^n}  \frac{1}{\mu_G(F_\alpha)}\mu_G\left(F_\alpha \cap (F_\alpha.(k_1\ldots k_n)^{-1}) \cap (F_\alpha.(k_2\ldots k_n)^{-1})^c \cap \ldots \cap (F_\alpha.(k_n)^{-1})^c\right) dk_1 \ldots dk_n \\
& \leq \| \Phi \|_\infty \int_{K^n} \frac{1}{\mu_G(F_\alpha)}\mu_G\left(F_\alpha \cap(F_\alpha.(k_n)^{-1})^c \right) dk_1 \ldots dk_n.
\end{split}
\end{equation}
Using the ordering described earlier, if the index $\alpha \geq (\epsilon \times \left(\|\Phi\|_\infty \mu_G(K^n)\right)^{-1},  K^{-1})$, then the F\o lner condition implies that \eqref{Eqn=UltraIntertwining} is less than $\epsilon$, and hence equation \eqref{Eqn=AmenableIntertwining} is true.

A direct modification of this argument now shows that the $i_p$ also intertwine the multiplicative amplifications of the Fourier and Schur multipliers. i.e. for $\beta_i \in S_{p_i}^N, \beta \in S_{p'}^{N}$, we have

\begin{equation}
\begin{split}
 \langle \id \otimes & i_p(T^{(N)}_\phi(\beta_1\otimes x_1,\ldots,\beta_n \otimes x_n)),  \id\otimes i_{p'}(\beta \otimes y^*) \rangle_{p,p'} = \\
   &\left\langle \left( M^{(N)}_{\widetilde{\phi}}(\id\otimes i_{p_1,\alpha}(\beta_1 \otimes x_1),\ldots,\id\otimes i_{p_n,\alpha}(\beta_n \otimes x_n) ) \right)_\alpha , \id\otimes i_{p'}(\beta\otimes y^*) \right\rangle
  \end{split}
  \end{equation}
\end{proof}

Combining this with Theorem \ref{thm:transference}, we get the multilinear version of \cite[Corollary 5.3]{CaspersDeLaSalle}

\begin{corollary}\label{Cor = AmenableTransference}
Let $1 <  p,p_1,\ldots,p_n < \infty$ be such that $p^{-1}=\sum_{i=1}^n p_i^{-1}$. Let $\phi \in C_b(G^{\times n})$ and assume that $G$ is amenable. If $\widetilde{\phi}$ is the symbol of a $(p_1, \ldots, p_n)$-bounded (resp. multiplicatively bounded) Schur multiplier then $\phi$ is the symbol of a $(p_1, \dots, p_n)$-bounded (resp. multiplicatively bounded) Fourier multiplier.
 Moreover,
\[
\Vert T_\phi \Vert_{(p_1,\ldots,p_n)} \leq \Vert M_{\widetilde{\phi}} \Vert_{(p_1,\ldots,p_n)}, \qquad
\Vert T_\phi \Vert_{(p_1,\ldots,p_n)-mb} \leq \Vert M_{\widetilde{\phi}} \Vert_{(p_1,\ldots,p_n)-mb},
\]
with equality in the $(p_1,\ldots,p_n)$-$mb$ norm when $G$ is second countable.
\end{corollary}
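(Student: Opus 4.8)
The plan is to deduce the corollary from the intertwining identity of Theorem \ref{thm:amenable-intertwining} by a duality argument, exploiting that for $1<q<\infty$ the maps $i_q \colon L_q(\cL G) \to \prod_{\cU} S_q(L_2(G))$ are complete isometries. I fix $x_i = \lambda(f_i)$ and $y = \lambda(f)$ with $f_i, f \in C_c(G)\star C_c(G)$; since $\lambda(C_c(G)\star C_c(G))$ is dense, and hence norming, in each $L_{p_i}(\cL G)$ and in $L_{p'}(\cL G)$ with $\frac1p+\frac1{p'}=1$, it suffices to establish the a priori bound
\[
\Vert T_\phi(x_1,\ldots,x_n)\Vert_{L_p(\cL G)} \le \Vert M_{\widetilde{\phi}}\Vert_{(p_1,\ldots,p_n)}\,\prod_{i=1}^n \Vert x_i\Vert_{L_{p_i}(\cL G)}
\]
on this dense domain, the full Fourier multiplier then being obtained by continuous multilinear extension together with the closed graph argument recorded in Section \ref{Sect=Preliminaries}.

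For the duality step I would use that, beyond being complete isometries, $i_p$ and $i_{p'}$ are compatible with the trace pairing, $\langle i_p(z), i_{p'}(w)\rangle_{p,p'} = \langle z, w\rangle_{p,p'}$, which is a consequence of the F\o lner estimate underlying \cite[Theorems 5.1, 5.2]{CaspersDeLaSalle} (a special case of the computation \eqref{Eqn=UltraIntertwining}). Combining this with the isometry of $i_p$ and the density of $\lambda(C_c(G)\star C_c(G))$ in $L_{p'}(\cL G)$, I compute $\Vert T_\phi(x_1,\ldots,x_n)\Vert_{L_p} = \sup \bigl| \langle i_p(T_\phi(x_1,\ldots,x_n)), i_{p'}(y^*)\rangle_{p,p'}\bigr|$ with the supremum over $y=\lambda(f)$, $\Vert y\Vert_{p'}\le 1$. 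By \eqref{Eqn=AmenableIntertwining} this pairing equals $\langle (M_{\widetilde{\phi}}(i_{p_1,\alpha}(x_1),\ldots,i_{p_n,\alpha}(x_n)))_\alpha, i_{p'}(y^*)\rangle_{p,p'}$, and H\"older's inequality for the ultraproduct pairing bounds it by $\Vert (M_{\widetilde{\phi}}(i_{p_1,\alpha}(x_1),\ldots))_\alpha\Vert_{\prod_\cU S_p}\,\Vert i_{p'}(y^*)\Vert_{\prod_\cU S_{p'}}$. The second factor equals $\Vert y\Vert_{p'}\le1$ by isometry; the first equals $\lim_{\cU}\Vert M_{\widetilde{\phi}}(i_{p_1,\alpha}(x_1),\ldots,i_{p_n,\alpha}(x_n))\Vert_{S_p}$, and applying the Schur bound $\Vert M_{\widetilde{\phi}}(i_{p_1,\alpha}(x_1),\ldots)\Vert_{S_p}\le \Vert M_{\widetilde{\phi}}\Vert_{(p_1,\ldots,p_n)}\prod_i\Vert i_{p_i,\alpha}(x_i)\Vert_{S_{p_i}}$ for each fixed $\alpha$, together with multiplicativity of the ultrafilter limit on bounded nets and $\lim_\cU \Vert i_{p_i,\alpha}(x_i)\Vert_{S_{p_i}} = \Vert x_i\Vert_{L_{p_i}}$, yields the claimed estimate.

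For the $(p_1,\ldots,p_n)$-$mb$ statement I would run the identical computation with the matrix amplifications, now invoking the amplified intertwining from the end of Theorem \ref{thm:amenable-intertwining} and the fact that a complete isometry $i_q$ induces, for every $N$, isometries $\id_{M_N}\otimes i_q$ on the vector-valued spaces $S_q^N[L_q(\cL G)]$ (Pisier's functoriality of $S_q^N[\,\cdot\,]$), combined with the identification $S_q^N[\prod_\cU S_q] \cong \prod_\cU S_q^N[S_q]$. This produces $\Vert T_\phi^{(N)}\Vert \le \Vert M_{\widetilde{\phi}}\Vert_{(p_1,\ldots,p_n)-mb}$ uniformly in $N$, hence $\Vert T_\phi\Vert_{(p_1,\ldots,p_n)-mb} \le \Vert M_{\widetilde{\phi}}\Vert_{(p_1,\ldots,p_n)-mb}$. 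Having shown in particular that $\phi$ is the symbol of a $(p_1,\ldots,p_n)$-$mb$ Fourier multiplier, I obtain the reverse inequality from Theorem \ref{thm:transference} (whose hypotheses $1<p_i\le\infty$, $1\le p\le\infty$, and second countability are satisfied), giving equality in the $mb$ norm when $G$ is second countable.

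The step requiring the most care is the passage from the pairing identity \eqref{Eqn=AmenableIntertwining}, which only tests against the distinguished vectors $i_{p'}(y^*)$, to a genuine norm bound. This is exactly what forces the strict range $1<p,p'<\infty$: only there are $i_p$ and $i_{p'}$ complete isometries compatible with the trace duality and is $\lambda(C_c(G)\star C_c(G))$ a norming dense subspace, so that the supremum over the $i_{p'}(y^*)$ recovers the full $L_p$-norm. I also expect to have to argue carefully that the componentwise Schur estimate holds for each $\alpha$ \emph{before} passing to $\lim_\cU$, and that the maps $i_{p_i,\alpha}$ are only asymptotically isometric, so that the isometry of $i_{p_i}$ is available only in the ultrafilter limit.
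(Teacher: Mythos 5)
Your proposal is correct and follows essentially the same route as the paper: the bound is deduced from the intertwining identity of Theorem \ref{thm:amenable-intertwining}, the (complete) isometry of $i_q$ for $1<q<\infty$, and density of $\lambda(C_c(G)\star C_c(G))$ in $L_{p_i}(\cL G)$, with the reverse $mb$-inequality supplied by Theorem \ref{thm:transference}. The only difference is that you spell out the duality step --- testing against the norming family $i_{p'}(y^*)$ and passing through the ultrafilter limit --- which the paper's two-line computation leaves implicit.
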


\begin{proof}
 For $1<p<\infty$, $i_p$ is a (complete) isometry. Hence, for $x_i$ as in the hypothesis of Theorem \ref{thm:amenable-intertwining}, we get
\[
\begin{split}
\Vert T_\phi (x_1,\ldots,x_n) \Vert_{L_p(\cL G)} = & \Vert i_p \circ T_\phi (x_1,\ldots,x_n) \Vert_{L_p(\cL G)} = \Vert M_{\widetilde{\phi}} (i_{p_1}(x_1),\ldots,i_{p_n}(x_n)) \Vert_{L_p(\cL G)} \\
 \leq & \Vert M_{\widetilde{\phi}} \Vert_{(p_1,\ldots p_n)} \prod_{i=1}^n \Vert x_i \Vert_{L_{p_i}(\cL G)}.
\end{split}
\]
For $1<p_i<\infty$, such $x_i$ are norm dense in $L_{p_i}(\cL G)$, so we get the bound on $\Vert T_\phi \Vert_{(p_1,\ldots,p_n)}$. The multiplicatively bounded version follows similarly, with the other inequality coming from Theorem \ref{thm:transference}.
\end{proof}

Let $H\leq G$ be a subgroup. Clearly, from Theorem \ref{Thm=Finite truncation}, the restriction of $\phi$ to $H$ also determines a bounded Schur multiplier, with
\[ \| M_{\widetilde{\phi_{|H}}} \|_{(p_1,\ldots,p_n)} \leq \| M_{\widetilde{\phi}}\|_{(p_1,\ldots,p_n)}. \]
Combining this observation with Corollary \ref{Cor = AmenableTransference} gives us the multiplicatively bounded version of the multilinear de Leeuw restriction theorem \cite[Theorem 4.5]{CJKM} for amenable discrete subgroups of second countable groups.  Note that the SAIN condition used in \cite[Theorem 4.5]{CJKM} is implicit here since the subgroup is amenable. 

\begin{corollary}
Let $G$ be a locally compact, unimodular, second countable group and let $1<p,p_1,\ldots,p_n<\infty$ with $p^{-1}=\sum_i p_i^{-1}$. Let $\phi \in C_b(G^{\times n})$ be a symbol of a $(p_1,\ldots,p_n)$-multiplicatively bounded Fourier multiplier.  If $H$ is an  amenable discrete subgroup of $G$, then we have
\[
\| T_{\phi|_H} \|_{(p_1,\ldots,p_n)-mb} \leq \|T_\phi \|_{(p_1,\ldots, p_n)-mb}.
\]

\end{corollary}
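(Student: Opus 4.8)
The plan is to chain together the two transference directions already established, interposing the restriction of the associated Schur multiplier from $G$ down to the discrete subgroup $H$. First I would apply Theorem \ref{thm:transference} to $G$ itself. Since $\phi$ is the symbol of a $(p_1,\ldots,p_n)$-multiplicatively bounded Fourier multiplier of $G$, the theorem yields that $\widetilde{\phi}$ is the symbol of a $(p_1,\ldots,p_n)$-multiplicatively bounded Schur multiplier of $G$ with
\[
\| M_{\widetilde{\phi}} \|_{(p_1,\ldots,p_n)-mb} \leq \| T_\phi \|_{(p_1,\ldots,p_n)-mb}.
\]

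Next comes the restriction step. Because $H \leq G$ is a subgroup, for $s_0,\ldots,s_n \in H$ the products $s_{i-1}s_i^{-1}$ again lie in $H$, so $\widetilde{\phi|_H}$ is exactly the restriction $\widetilde{\phi}|_{H^{\times n+1}}$. I would then invoke the finite truncation result, Theorem \ref{Thm=Finite truncation}, in its $(p_1,\ldots,p_n)$-$mb$ form: the multiplicatively bounded Schur norm of a continuous symbol equals the supremum of the finite Schur norms of its restrictions to finite subsets lying in the support of the relevant measure. The support of Haar measure $\mu_G$ is all of $G$, so every finite subset of $H$ belongs to it; and since $H$ is discrete, its own counting measure also has full support. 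Comparing the two suprema --- one ranging over finite subsets of $H$ (with counting measure), the other over all finite subsets of $G$ --- gives the multiplicatively bounded analogue of the inequality recorded just before the statement,
\[
\| M_{\widetilde{\phi|_H}} \|_{(p_1,\ldots,p_n)-mb} \leq \| M_{\widetilde{\phi}} \|_{(p_1,\ldots,p_n)-mb}.
\]

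Finally I would run the amenable transference in the reverse direction on $H$. As a discrete group, $H$ is locally compact and unimodular; being a discrete subgroup of the second countable group $G$ it is countable, hence itself second countable; and it is amenable by hypothesis. With $1 < p, p_1,\ldots,p_n < \infty$ all hypotheses of Corollary \ref{Cor = AmenableTransference} are met, so since $\widetilde{\phi|_H}$ is the symbol of a $(p_1,\ldots,p_n)$-multiplicatively bounded Schur multiplier of $H$, the corollary produces
\[
\| T_{\phi|_H} \|_{(p_1,\ldots,p_n)-mb} \leq \| M_{\widetilde{\phi|_H}} \|_{(p_1,\ldots,p_n)-mb}.
\]
Concatenating the three inequalities delivers the claim. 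The only genuinely delicate point is the measure-theoretic comparison in the middle step: one must check that reading off the Schur norm on $H$ with respect to counting measure agrees with the finite-truncation description of the $G$-side Schur norm. This is precisely what the common value of the norm in Theorem \ref{Thm=Finite truncation}(iii) guarantees, so no new estimate is required, and the remaining verifications (countability of $H$, unimodularity, and $\widetilde{\phi|_H} = \widetilde{\phi}|_{H^{\times n+1}}$) are routine.
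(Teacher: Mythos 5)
Your proposal is correct and follows essentially the same route as the paper: Theorem \ref{thm:transference} to pass from $T_\phi$ to $M_{\widetilde{\phi}}$ on $G$, the finite-truncation characterization (Theorem \ref{Thm=Finite truncation}) to restrict the Schur multiplier to $H$, and Corollary \ref{Cor = AmenableTransference} on the amenable discrete group $H$ to return to $T_{\phi|_H}$. The only cosmetic difference is that you verify second countability of $H$, which is not actually needed for the inequality direction of Corollary \ref{Cor = AmenableTransference}.
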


\begin{remark} \label{Rmk=NonUnimodular}
We now discuss some difficulties one encounters when modifying the above methods to the non-unimodular case, meaning that the Plancherel weight $\varphi$ is no longer a trace. In this case $\l(C_c(G) \star C_c(G))$ is no longer a common dense subset of the $L_p(\cL G)$. Rather, we have embeddings $\eta_{t,p}: \l(C_c(G) \star C_c(G)) \to L_p(\cL G)$ with dense image given by $\l(f) \mapsto \Delta^{(1-t)/p} \l(f) \Delta^{t/p}$, where $0 \leq t \leq 1$ and $\Delta$ is the multiplication operator with the modular function, which we denote also by $\Delta$ (see \cite{TerpThesis} and \cite{TerpInterpolation} for details). \\

This raises the question how to define the $(p_1, \dots, p_n)$-Fourier multiplier. A possible choice would be to take some $t \in [0,1]$ and define
\[
    T_\phi(\eta_{t,p_1}(x_1), \ldots, \eta_{t, p_n}(x_n)) = \eta_{t,p}(T_\phi(x_1, \ldots, x_n)).
\]
However, with this definition, the intertwining property in the ultralimit of \eqref{Eqn=UltraIntertwining} will no longer hold. This can be illustrated by considering the case $n=2$ and $\phi(x,y) = \phi_1(x) \phi_2(y)$. In order for the intertwining property to hold, the Fourier multiplier would have to satisfy
\[
    T_\phi(\eta_{t,p_1}(x_1), \eta_{t,p_2}(x_2)) = T_{\phi_1}(\eta_{t,p_1}(x_1)) T_{\phi_2}(\eta_{t,p_2}(x_2))
\]
where on the right hand side we use the linear definition of the Fourier multiplier from \cite{CaspersDeLaSalle}. This is not the case with the above definition.
 As a consequence, we no longer have nice relations between nested linear Fourier multipliers and multilinear Fourier multipliers, as in \cite[Lemma 4.4]{CJKM}. As the proof of the multilinear restriction theorem in \cite{CJKM} repeatedly uses such formulae, it is also unclear if these de Leeuw type theorems are still valid in the non-unimodular case.
\end{remark}

\section{Domain of the completely bounded bilinear Hilbert transform and Calder\'on-Zygmund operators} \label{Sect=HilbertTransform}

In this final section we prove a result about non-boundedness of the bilinear Hilbert transform based on our multilinear transference techniques. We prove an analogous result for examples of Calder\'on-Zygmund operators. This shows that the main results from \cite{Amenta}, \cite{DiPlinio22} about $L_p$-boundedness of certain Fourier multipliers cannot be extended to range spaces with $p \leq 1$. This is in contrast with the Euclidean (non-vector valued) case.

\subsection{Lower bounds for the vector valued bilinear Hilbert transform}
For $0 < p < \infty$ let $S_p^N = S_p(\mathbb{C}^N)$ be the Schatten $L_p$-space associated with linear operators on $\mathbb{C}^N$. For $ 0 < p < 1$ we have that $S_p^N$ is a quasi-Banach space satisfying the quasi-triangle inequality:
\[
\Vert x + y \Vert_p \leq 2^{\frac{1}{p}-1} (\Vert x \Vert_p + \Vert y \Vert_p), \qquad x,y \in S_p^N.
\]
 We set
  \[
  h(\xi_1, \xi_2) =   \chi_{\geq 0}(\xi_1 - \xi_2), \qquad \xi_1, \xi_2 \in \mathbb{R}.
  \]
The first statement of the following theorem is the main result of \cite{LaceyThiele} and the latter statement of this theorem for $1 <  p < \infty$ was proved in \cite{Amenta} and \cite{DiPlinio22}.

\begin{theorem}\label{Thm=LaceyThiele}

For every $1 < q_1,q_2,q, p_1, p_2 < \infty, \frac{2}{3} < p < \infty$ with $\frac{1}{p} = \frac{1}{p_1} + \frac{1}{p_2}$ and $N \in \mathbb{N}_{\geq 1}$ there exists a bounded linear map
\begin{equation}\label{Eqn=HilbertTransform}
T_h^{(N)}: L_{p_1}(\mathbb{R}, S_{q_1}^N ) \times  L_{p_2}(\mathbb{R}, S_{q_2}^N ) \rightarrow L_{p}(\mathbb{R}, S_{q}^N )
\end{equation}
that is determined by
\[
T_{h}^{(N)}(f_1, f_2)(s) = \int_\mathbb{R}  \int_\mathbb{R}  \widehat{f_1}(\xi_1) \widehat{f_2}(\xi_2) h(\xi_1, \xi_2) e^{i s (\xi_1 + \xi_2)} d\xi_1 d\xi_2,
\]
where $s \in \mathbb{R}$ and $f_i, i =1,2$ are functions in $L_{p_i}(\mathbb{R}, S_{q_i}^N )$  whose Fourier transforms $\widehat{f_i}$ are continuous compactly supported functions $\mathbb{R} \rightarrow S_{q_i}^N$.  If $1 < p := (\frac{1}{p_1} + \frac{1}{p_2})^{-1} < \infty$ and $\frac{1}{\max\{q,q'\}}+\frac{1}{\max\{q_1,q_1'\}}+\frac{1}{\max\{q_2,q_2'\}} > 1$ we have that this operator is moreover uniformly bounded in $N$.\end{theorem}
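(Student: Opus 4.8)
The plan is to separate the two regimes of the statement. Boundedness for each fixed $N$ over the full Lacey--Thiele range $\frac{2}{3} < p < \infty$ should follow purely formally from the scalar theorem of \cite{LaceyThiele} together with the equivalence of all norms on a finite-dimensional matrix space, whereas the uniform-in-$N$ bound on the restricted range $1 < p < \infty$ is the genuinely deep input and should be imported wholesale from \cite{Amenta} and \cite{DiPlinio22}.

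For the fixed-$N$ statement, first I would unwind the matrix multiplication hidden in the defining integral. Writing $f_i^{jk}(s) := (f_i(s))_{jk}$ for the scalar entries, matrix multiplication under the integral sign gives the entrywise identity
\[
\big(T_h^{(N)}(f_1,f_2)\big)_{jl} = \sum_{k=1}^N T_h\big(f_1^{jk}, f_2^{kl}\big),
\]
where $T_h$ on the right is the \emph{scalar} bilinear Hilbert transform. Each scalar term obeys $\|T_h(f_1^{jk},f_2^{kl})\|_{L_p} \lesssim \|f_1^{jk}\|_{L_{p_1}}\|f_2^{kl}\|_{L_{p_2}}$ by \cite{LaceyThiele}, valid precisely on the range $\frac{2}{3}<p<\infty$, $1<p_1,p_2<\infty$. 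Since $N$ is fixed, the Schatten norms $\|\cdot\|_{S_q^N}$ are mutually equivalent and comparable to the largest matrix entry; hence bounding $\|T_h^{(N)}(f_1,f_2)(s)\|_{S_q^N}$ by $\sum_{j,l}|(T_h^{(N)}(f_1,f_2))_{jl}(s)|$, taking $L_p$-norms, and summing the $N^3$ scalar estimates yields the claim with a constant depending on $N$ (through the norm-equivalence constants). I expect no obstacle here, as dependence on $N$ is permitted in the first assertion.

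For the uniform bound I would invoke the vector-valued time-frequency results directly. The correct framework is the theory of outer $L^p$ spaces together with the iterated (helicoidal-type) embedding estimates developed by Amenta--Uraltsev \cite{Amenta} and Di Plinio--Li--Martikainen--Vuorinen \cite{DiPlinio22}, which produce wave-packet bounds for inputs valued in $S_q$-type targets. The H\"older-type restriction $\frac{1}{\max\{q,q'\}}+\frac{1}{\max\{q_1,q_1'\}}+\frac{1}{\max\{q_2,q_2'\}}>1$ is exactly the arithmetic condition under which the three embedding exponents close and the associated trilinear form extends boundedly \emph{independently of $N$}; one then specializes the Banach-valued estimate to the Schatten targets $S_q^N$. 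Reproving these embedding theorems is far beyond the present scope, so I would state this regime as a citation. The hard part is thus entirely this second regime: the scalar and finite-dimensional reductions are routine bookkeeping, while the uniform constant hinges on the full strength of the vector-valued Carleson-type analysis — and it is precisely the boundary case excluded by the above inequality, the genuine H\"older relation among the $q_i$ with matrix multiplication, that is shown to fail at $p=1$ in Theorem \ref{Thm=LowerBoundThiele}.
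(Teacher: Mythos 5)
Your proposal is correct and matches the paper's treatment: the paper states this theorem without proof, attributing the fixed-$N$ statement to \cite{LaceyThiele} and the uniform-in-$N$ statement to \cite{Amenta} and \cite{DiPlinio22}, exactly the two citations you rely on. Your additional entrywise reduction $\big(T_h^{(N)}(f_1,f_2)\big)_{jl}=\sum_k T_h(f_1^{jk},f_2^{kl})$ combined with equivalence of norms on $M_N$ (and the quasi-triangle inequality when $\tfrac23<p<1$) is a sound way to pass from the scalar Lacey--Thiele bound to the matrix-valued statement for fixed $N$.
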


  Note that the map $T_h^{(N)}$ as defined above coincides with the multiplicative amplification of the map $T_h := T_h^{(1)}$ as defined in Section \ref{Sect=MultilinearNorms}, so this notation is consistent.

Our aim is to show that the results of \cite{Amenta} and \cite{DiPlinio22} cannot be extended to the case { $p_i=q_i$, $q=p = 1=\frac{1}{p_1}+\frac{1}{p_2}$}; i.e. the bound of \eqref{Eqn=HilbertTransform} is not uniform in $N$. In particular we show that the bound can be estimated from below by $C \log(N)$ for some   constant $C$ independent of $N$. 

For a function $\phi: \mathbb{R}^2 \rightarrow \mathbb{C}$ we recall the definition
\[
\widetilde{\phi}(\lambda_0, \lambda_1, \lambda_2) = \phi(\lambda_0 - \lambda_1, \lambda_1 - \lambda_2), \qquad \lambda_i \in \mathbb{R}.
\]

\begin{theorem}\label{Thm=LowerBoundThiele}
Let $1  < p_1, p_2 < \infty$ be such that $\frac{1}{p_1} + \frac{1}{p_2} = 1$.
There exists an absolute constant $C >0$ such that for every $N \in \mathbb{N}_{\geq 1}$ we have
\[
A_{p_1, p_2,N} := \Vert T_h^{(N)}: L_{p_1}(\mathbb{R}, S_{p_1}^N ) \times  L_{p_2}(\mathbb{R}, S_{p_2}^N ) \rightarrow L_{1}(\mathbb{R}, S_{1}^N ) \Vert > C \log(N).
\]
\end{theorem}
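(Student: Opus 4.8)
The plan is to transfer the estimate to a finite bilinear Schur multiplier and there exploit the logarithmic growth of triangular truncation at the $S_1$-endpoint. Identifying $\cL\mathbb{R}$ with $L_\infty(\mathbb{R})$ via the Plancherel weight, one has $S_q^N[L_q(\cL\mathbb{R})]\cong L_q(\mathbb{R},S_q^N)$, and under this identification the multiplicative amplification $T_h^{(N)}$ of the Fourier multiplier $T_h$ is exactly the vector valued bilinear Hilbert transform, so that $A_{p_1,p_2,N}=\Vert T_h^{(N)}\Vert$ with $T_h^{(N)}$ understood as in \eqref{Eqn=NAmplify}. Since $1<p_1,p_2\le\infty$ and $p=1$, Theorem \ref{thm:transference} applies; in fact its key estimate \eqref{Eqn=ConvApprox}, applied to $G=\mathbb{R}$ and a finite set $F\subseteq\mathbb{R}$ with $|F|=N$, yields
\[
\Vert M_{\widetilde{h}|_{F^{\times 3}}}:S_{p_1}(\ell_2(F))\times S_{p_2}(\ell_2(F))\to S_1(\ell_2(F))\Vert\ \le\ A_{p_1,p_2,N}.
\]
Thus it suffices to choose a single set $F$ of cardinality $N$ on which this finite Schur multiplier has norm $\gtrsim\log N$. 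I would take $F=\{1,\dots,N\}\subseteq\mathbb{R}$, for which $\widetilde{h}(i,j,k)=\chi_{\ge 0}(i-2j+k)=\chi_{i+k\ge 2j}$, so that $M_{\widetilde{h}|_F}(A,B)_{ik}=\sum_{j\le (i+k)/2}A_{ij}B_{jk}$.

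For the lower bound I would pass to the trilinear form through trace duality: $\Vert M_{\widetilde{h}|_F}(A,B)\Vert_{S_1}=\sup_{\Vert C\Vert_{S_\infty}\le 1}|\mathrm{Tr}(C\,M_{\widetilde{h}|_F}(A,B))|$, where
\[
\mathrm{Tr}\big(C\,M_{\widetilde{h}|_F}(A,B)\big)=\sum_{i,j,k\in F}\chi_{i+k\ge 2j}\,A_{ij}B_{jk}C_{ki}.
\]
It then remains to exhibit $A,B,C$ for which this sum is $\gtrsim\log N\cdot\Vert A\Vert_{S_{p_1}}\Vert B\Vert_{S_{p_2}}\Vert C\Vert_{S_\infty}$. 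The natural candidates are of discrete Hilbert transform type, e.g.\ $A_{ij}\sim(i-j)^{-1}$ and $B_{jk}\sim(j-k)^{-1}$ off the diagonal, which are full rank with uniformly bounded singular values, so that $\Vert A\Vert_{S_{p_1}}\sim N^{1/p_1}$, $\Vert B\Vert_{S_{p_2}}\sim N^{1/p_2}$, and the product of the input norms is $\sim N$. The middle summation with the constraint $j\le(i+k)/2$ is then expected to produce a logarithmic factor, mirroring the classical fact that the triangular truncation $\mathcal{T}$ has norm $\sim\log N$ on $S_1^N$ (equivalently on $S_\infty^N$); with $C$ chosen as the corresponding dual extremizer, the estimate reduces to a logarithmically divergent triple sum.

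The main obstacle is the normalization forced by the Hölder relation $\tfrac1{p_1}+\tfrac1{p_2}=1$. Any attempt to collapse the bilinear multiplier to a single linear triangular truncation — by taking one input equal to the identity, for instance $M_{\widetilde{h}|_F}(A,I)=\mathcal{T}(A)$ or $M_{\widetilde{h}|_F}(I,B)=\mathcal{T}(B)$ — is defeated, because $\Vert I\Vert_{S_{p_i}}=N^{1/p_i}$ while triangular truncation is \emph{bounded} on $S_{p_i}$ for $1<p_i<\infty$; this route yields only an $O(1)$ bound. The logarithm is therefore a genuinely bilinear endpoint phenomenon: it requires coherent, full-rank test matrices that are efficient in $S_{p_1}$ and $S_{p_2}$ simultaneously (rank-one or near-diagonal choices, whose $S_{p_i}$-norm coincides with the $S_2$-norm, are likewise too wasteful), and the technical heart is the careful evaluation of the resulting triple sum, i.e.\ verifying that the constrained bilinear product amplifies the $S_1$-norm by a factor $\log N$. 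Once this finite bound is established, the transference of Step 1 transfers it back to $A_{p_1,p_2,N}$, completing the proof.
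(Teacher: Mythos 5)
Your first step --- transferring the vector-valued bound on $\R$ directly to a finite bilinear Schur multiplier via Theorem \ref{thm:transference} and the estimate \eqref{Eqn=ConvApprox} --- is a legitimate alternative to the paper's route, which instead mollifies $h$, restricts to $\mathbb{Z}$ by the de Leeuw theorem of \cite{CJKM}, and then intertwines the $\mathbb{T}$-Fourier multiplier with the Schur multiplier on $\mathbb{Z}_N$ via the unitary $U=\sum p_l\otimes\zeta_l$. One caveat: $h=\chi_{\geq 0}(\xi_1-\xi_2)$ is not in $C_b(\R^2)$, so Theorem \ref{thm:transference} does not apply to it verbatim; you must first replace $h$ by a mollification $H$ (using \cite[Lemma 4.3]{CJKM} to get $\Vert T_H^{(N)}\Vert\leq A_{p_1,p_2,N}$), and then note that $H=h$ on $\mathbb{Z}^2$ so that $\widetilde H$ and $\widetilde h$ agree on your chosen $F^{\times 3}$ with $F=\{1,\dots,N\}\subset\mathbb{Z}$. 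This is exactly what the paper does and is easily grafted onto your argument.

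The genuine gap is in the second step: you never actually establish the $\log N$ lower bound for $M_{\widetilde h|_{F^{\times 3}}}:S_{p_1}^N\times S_{p_2}^N\to S_1^N$. The triple sum with $A_{ij}\sim(i-j)^{-1}$, $B_{jk}\sim(j-k)^{-1}$ is only ``expected to produce a logarithmic factor''; nothing is computed, and it is not at all clear that these full-rank test matrices work. Worse, you dismiss the rank-one route as ``too wasteful,'' and that is precisely the route that succeeds. Fix $j_0$ in the middle of $F$ and take $A=\mathbf{1}\,e_{j_0}^{T}$ and $B=e_{j_0}\mathbf{1}^{T}$ with $\mathbf{1}$ the all-ones vector. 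Then $M_{\widetilde h}(A,B)_{ik}=\chi_{i+k\geq 2j_0}$, i.e.\ the (anti)triangular truncation of the all-ones matrix, whose $S_1$-norm is $\gtrsim N\log N$ by \cite[Proof of Lemma 10]{Davies}, while $\Vert A\Vert_{S_{p_1}}\Vert B\Vert_{S_{p_2}}=\Vert\mathbf{1}\Vert_2^2=N$ since all Schatten norms coincide on rank-one matrices. The ratio is exactly $\log N$: the rank-one inputs are not wasteful because the gain sits in the \emph{output} norm, not the input normalization. The paper packages this as the combination of \cite[Theorem 2.3]{PSSTAdvances} (fixing the middle variable of a multilinear Schur multiplier dominates the linear Schur multiplier $M_{H_{j_0}}$ on $S_1$, under the H\"older relation $\tfrac1{p_1}+\tfrac1{p_2}=1$) with Davies' lower bound for triangular truncation. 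Without this ingredient --- or a completed evaluation of your triple sum --- the proof is not finished.
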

\begin{proof}
In the proof let $\mathbb{Z}_N = [-N, N] \cap \mathbb{Z}$. We may naturally identify $S_p^{2N+1}$ with  $S_p(   \ell_2(\mathbb{Z}_N) )$.
 Let $\varphi \in C_c(\mathbb{R}), \varphi \geq 0$ be such that $\varphi(t) = \varphi(-t), t\in \mathbb{R}$,  $\Vert \varphi \Vert_{L_1(G)} = 1$ and its support is contained in $[-\frac{1}{2}, \frac{1}{2}]$. Set for $s_1, s_2 \in \mathbb{R}$,
\[
H(s_1, s_2) = \int_{\mathbb{R}} h(s_1 + t, -t + s_2 ) \varphi(t) dt.
\]
Then $H$ is continuous and $H$ equals $h$ on $\mathbb{Z} \times \mathbb{Z}$.
As a consequence of \cite[Lemma 4.3]{CJKM} we find
\[
\Vert T_H^{(2N+1)}: L_{p_1}(\mathbb{R}, S_{p_1}^{2N+1} ) \times  L_{p_2}(\mathbb{R}, S_{p_2}^{2N+1} ) \rightarrow L_{1}(\mathbb{R}, S_{1}^{2N+1} ) \Vert \leq A_{p_1, p_2,2N+1}.
\]
By the multilinear De Leeuw restriction theorem \cite[Theorem C]{CJKM} we have
\begin{equation}\label{Eqn=MultiplierEstimate}
\Vert T_{H\vert_{\mathbb{Z} \times \mathbb{Z}}}^{(2N+1)}: L_{p_1}(\mathbb{T}, S_{p_1}^{2N+1} ) \times  L_{p_2}(\mathbb{T}, S_{p_2}^{2N+1} ) \rightarrow L_{1}(\mathbb{T}, S_{1}^{2N+1} ) \Vert \leq  A_{p_1, p_2,2N+1}.
\end{equation}
Let $\zeta_l(z) = z^l, z\in \mathbb{T}, l \in \mathbb{Z}$. Set the unitary $U = \sum_{l=-N}^N p_l \otimes \zeta_l$ and for any $1 < p < \infty$ the isometric map
\[
\pi_p: S_p^{2N+1} \rightarrow S_p^{2N+1} \otimes L_p(\mathbb{T}): x \mapsto  U (x \otimes 1)U^\ast.
\]
Then,
\[
T_{H\vert_{\mathbb{Z} \times \mathbb{Z}}}^{(2N+1)} \circ (\pi_{p_1} \times \pi_{p_2}) = \pi_p \circ  M_{\widetilde{ H}\vert_{\mathbb{Z}_N \times \mathbb{Z}_N}}.
\]
 This together with \eqref{Eqn=MultiplierEstimate} implies that
\begin{equation}\label{Eqn=BilHil1}
\Vert  M_{\widetilde{ H}\vert_{\mathbb{Z}_N \times \mathbb{Z}_N}}:   S_{p_1}^{2N+1}  \times   S_{p_2}^{2N+1}  \rightarrow  S_{1}^{2N+1}   \Vert
\leq  A_{p_1, p_2,2N+1}.
\end{equation}
Now set $H_j(s,t) = \widetilde{ H } \vert_{\mathbb{Z} \times \mathbb{Z}}(s, j, t), s,t \in \mathbb{Z}$. Note that
\[
H_j(s,t) = { \chi_{\geq 0}(s+t-2j)}.
\]
 By \cite[Theorem 2.3]{PSSTAdvances} we find that
\begin{equation}\label{Eqn=BilHil2}
\max_{-N \leq j \leq N} \Vert M_{ H_j  }: S_{1}^{2N+1}  \rightarrow  S_{1}^{2N+1}  \Vert
\leq \Vert M_{\widetilde{ H }\vert_{\mathbb{Z}_N \times \mathbb{Z}_N}}^{(N)}:  S_{p_1}^{2N+1}  \times    S_{p_2}^{2N+1}  \rightarrow   S_{1}^{2N+1}  \Vert.
\end{equation}
For $j = 0$ we have that $M_{ H_j  }$ is the triangular truncation map and therefore by \cite[Proof of Lemma 10]{Davies} (apply $M_{H_0}$ to the matrix consisting of only 1's) there is a constant $C>0$ such that
\begin{equation}\label{Eqn=BilHil3}
C \log(2N+1) \leq    \Vert M_{ H_0  }: S_{1}^{2N+1}  \rightarrow  S_{1}^{2N+1}   \Vert.
\end{equation}
Combining \eqref{Eqn=BilHil1}, \eqref{Eqn=BilHil2}, \eqref{Eqn=BilHil3} yields the result for $2N+1$. Since the norm of $T_h^{(N)}$ is increasing in $N$ the result for even $N$ also follows.

\end{proof}

\subsection{Lower bounds for Calder\'on-Zygmund operators}

The aim of this section is to show a result similar to Theorem \ref{Thm=LowerBoundThiele} for Calder\'on-Zygmund operators by considering an example. This shows that the results from \cite{DiPlinio20} cannot be extended to the case where the range space is $p=1$. This is in contrast with the commutative situation where Grafakos-Torres \cite{GrafakosTorres} have shown boundedness of a   class of Calder\'on-Zygmund operators with natural size and smoothness conditions as maps $L_p \times \ldots \times L_p \rightarrow L_{p/n}$ for $p \in (1, \infty)$.

Consider any   symbol $m$ that is smooth on $\mathbb{R}^2 \backslash \{ 0 \}$, homogeneous and which is determined on one of the quadrants by
\begin{equation} \label{Eqn=Symbolm}
m(s,t) = \frac{s}{s-t}, \qquad s \in \mathbb{R}_{>0}, t \in \mathbb{R}_{<0}.
\end{equation}
Here homogeneous means that  $m(\lambda s, \lambda t) = m(s,t), s,t \in \mathbb{R}, \lambda >0$. We assume moreover that $m$ is regulated at 0, by which we mean that
\[
m(0) = \pi^{-1} r^{-2}   \int_{ \Vert (t_1, t_2) \Vert_2 \leq r} m(t_1, t_2) dt_1 d t_2, \qquad r > 0.
\]
As $m$ is homogeneous this expression is independent of $r$. This type of symbol $m$ is important as it occurs naturally  in the analysis of divided difference functions; for instance it plays a crucial role  in \cite{CaspersIsrael}.

\begin{theorem}\label{Thm=CZLowerBound}
Let $1  < p_1, p_2 < \infty$ be such that $\frac{1}{p_1} + \frac{1}{p_2} = 1$.
There exists an absolute constant $C >0$ such that
\[
B_{p_1, p_2,N} := \Vert T_m^{(N)}: L_{p_1}(\mathbb{R}, S_{p_1}^N ) \times  L_{p_2}(\mathbb{R}, S_{p_2}^N ) \rightarrow L_{1}(\mathbb{R}, S_{1}^N ) \Vert > C \log(N).
\]
\end{theorem}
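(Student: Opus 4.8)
The plan is to mirror the proof of Theorem \ref{Thm=LowerBoundThiele} line by line, pushing the multiplier down to a bilinear Schur multiplier on matrices where the logarithmic blow-up can be read off. First I would regularize: since $m$ is smooth away from $0$ and homogeneous, averaging $m(s_1+t,s_2-t)$ against $\varphi$ exactly as in the definition of $H$ (and using the regulated value $m(0)$ at the origin) produces a symbol $\mathcal M \in C_b(\mathbb R^2)$. By \cite[Lemma 4.3]{CJKM} one has $\Vert T_{\mathcal M}^{(2N+1)}\Vert \le B_{p_1,p_2,2N+1}$ since $\mathcal M$ is an average of $m$; note that $\mathcal M$ coincides with $m$ to leading order at large scales, and the regularization only disturbs an $O(1)$-neighbourhood of the origin, which is harmless because the $\log N$ will come from a whole range of scales $1\ll|\lambda|\lesssim N$. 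The multilinear de Leeuw restriction theorem \cite[Theorem C]{CJKM} then transfers the bound to the torus, and conjugating by the unitary $U=\sum_{l=-N}^N p_l\otimes\zeta_l$ intertwines the resulting Fourier multiplier with a bilinear Schur multiplier, giving
\[
\Vert M_{\widetilde{\mathcal M}\vert_{\mathbb Z_N\times\mathbb Z_N}}: S_{p_1}^{2N+1}\times S_{p_2}^{2N+1}\to S_1^{2N+1}\Vert \le B_{p_1,p_2,2N+1}.
\]

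It therefore suffices to bound this bilinear Schur multiplier below by $C\log N$. On the region where the argument of $m$ lands in the distinguished quadrant, namely $\lambda_1<\min(\lambda_0,\lambda_2)$, the symbol is governed by $\widetilde m(\lambda_0,\lambda_1,\lambda_2)=\tfrac{\lambda_0-\lambda_1}{\lambda_0+\lambda_2-2\lambda_1}$, so I would test $M_{\widetilde{\mathcal M}}$ on pairs $(A,B)$ with $A$ supported on $\{\lambda_0>\lambda_1\}$ and $B$ on $\{\lambda_1<\lambda_2\}$, so that only these guaranteed values contribute. Using the integral representation $\tfrac{p}{p+q}=\int_0^\infty p\,e^{-up}e^{-uq}\,du$ one can write $M_{\widetilde m}(A,B)=\int_0^\infty A_uB_u\,du$, where $A_u,B_u$ are the Hadamard products of $A,B$ with the truncated Toeplitz kernels $p\mapsto pe^{-up}$ and $q\mapsto e^{-uq}$. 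The aim is to choose near-diagonal test matrices (for instance discrete Hilbert-transform kernels, or the extremizer used in \cite[Proof of Lemma 10]{Davies}) for which the small-$u$ part of this integral accumulates a factor $\log N$, reducing the estimate to the logarithmic divergence of the triangular truncation on $S_1^{2N+1}$.

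The hard part will be exactly this final lower bound, and it is genuinely different from the bilinear Hilbert transform case. There, fixing the middle index via \cite[Theorem 2.3]{PSSTAdvances} immediately yields the triangular truncation $M_{H_0}$, because $\widetilde h$ jumps across the anti-diagonal $\lambda_0+\lambda_2=2\lambda_1$. For the Calder\'on--Zygmund symbol no such jump is available: on the distinguished quadrant $\tfrac{s}{s-t}$ is smooth and bounded in $(0,1)$, and the corresponding middle-index slice $H_j(s,t)=\tfrac{s-j}{s+t-2j}$ is a Cauchy-type symbol which defines a \emph{bounded} Schur multiplier, uniformly in $N$. Hence the logarithmic growth cannot be extracted from any single linear slice; it must arise from the genuinely bilinear interaction concentrated near the common diagonal $\lambda_0=\lambda_1=\lambda_2$, where $m$ carries its homogeneous singularity and where the regulated value $m(0)$ enters. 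Controlling the resulting Schatten-$1$ norm — separating the divergent contribution from the bounded Cauchy part and matching it against the $\log N$ bound of \cite{Davies} without over-counting (the crude integral bound above only returns $(\log N)^2$) — is where the substantive work lies.
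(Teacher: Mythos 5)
Your reduction to a bilinear Schur multiplier (regularize, apply the de Leeuw restriction theorem, conjugate by $U=\sum p_l\otimes\zeta_l$, then slice the middle variable via \cite[Theorem 2.3]{PSSTAdvances}) is exactly the paper's route, but the proposal stops short of the one step that actually produces the $\log N$: the final lower bound is never established. You correctly observe that on the \emph{equally spaced} lattice $\mathbb{Z}_N$ the middle-index slice $\frac{s-j}{s+t-2j}$ is a tame Cauchy-type symbol, and from this you conclude that no linear slice can carry the logarithm, retreating to a ``genuinely bilinear'' integral-representation argument that you yourself acknowledge is incomplete (and whose crude form would in any case only give upper bounds of order $(\log N)^2$, not a lower bound). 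The missing idea is that you are free to choose the frequency set. The paper first takes, via \cite[Lemma 10, Corollary 11]{Davies}, points $0=\lambda_0<\lambda_1<\cdots<\lambda_N$ (a rapidly growing, non-arithmetic sequence, approximated inside some lattice $K_N^{-1}\mathbb{Z}$) for which the \emph{linear} Schur multiplier with symbol $\phi(i,j)=\frac{\lambda_i-\lambda_j}{\lambda_i+\lambda_j}$ already has $S_1^{N+1}\to S_1^{N+1}$ norm at least $C\log N$. Running your own reduction on the lattice $K_N^{-1}\mathbb{Z}$ and then restricting the resulting Schur multiplier to the finite set $\Lambda_N=\{\lambda_0,\ldots,\lambda_N\}$ (legitimate by Theorem \ref{Thm=Finite truncation}), the middle slice at $\lambda=0$ becomes $\widetilde m(s,0,t)=m(s,-t)=\frac{s}{s+t}=\frac12\bigl(1+\phi(s,t)\bigr)$ on $\Lambda_N\times\Lambda_N$, which inherits the $\log N$ lower bound from Davies up to the harmless additive identity term. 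So the logarithm \emph{is} extracted from a single linear slice after all; the obstruction you identified is an artifact of insisting on uniform spacing.

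A secondary, more minor point: your regularization averages $m(s_1+t,s_2-t)$ against a one-variable bump as in Theorem \ref{Thm=LowerBoundThiele}, whereas the paper mollifies $m$ by a two-dimensional average over discs (matching the ``regulated at $0$'' normalization) and then lets the radius tend to $0$ so that $m_r\to m$ pointwise on the lattice. The one-dimensional average need not reproduce $m$ at the lattice points nor interact correctly with the prescribed value $m(0)$, so this step should be done as in the paper; but this is easily repaired, unlike the absent lower bound.
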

\begin{proof}
By \cite[Lemma 10]{Davies} (and the proof of \cite[Corollary 11]{Davies}) there exist constants $0 = \lambda_0 < \lambda_1 <  \ldots < \lambda_N$ such that the function
\[
\phi(i,j) = \frac{\lambda_i - \lambda_j}{\lambda_i + \lambda_j}, \qquad 1 \leq i,j \leq N,
\]
is the symbol of a linear Schur multiplier $M_\phi: S_1^{N} \rightarrow S_1^N$ whose norm is at least $C  \log(N)$ for some absolute constant $C > 0$. Without loss of generality we may assume that $\lambda_i \in K_N^{-1} \mathbb{Z}$ for some $K_N \in \mathbb{N}_{\geq 1}$ by an approximation argument.
Then in this proof let $\Lambda_N = \{ \lambda_0,  \lambda_1, \ldots,  \lambda_N \}$. We may naturally identify $S_p^{N+1}$ with  $S_p(  \ell_2(\Lambda_N))$ by identifying $E_{i,j}$ with $E_{\lambda_i, \lambda_j}$. We proceed as in the proof of Theorem \ref{Thm=LowerBoundThiele}.

For $\lambda \in K_N^{-1} \mathbb{Z}$ let $p_\lambda$ be the orthogonal projection of $\ell_2(K_N^{-1} \mathbb{Z})$ onto $\mathbb{C} \delta_\lambda$.
Further for $\lambda \in K_N^{-1} \mathbb{Z}$ set $\zeta_\lambda: \mathbb{T} \rightarrow \mathbb{C}$ by  $\zeta_\lambda(z) = z^{  K_N \lambda}, \theta \in \mathbb{R}$. This way every $z \in \mathbb{T}$ determines a representation $\lambda \mapsto \zeta_\lambda(z)$ of $K_N^{-1} \mathbb{Z}$ and this identifies $\mathbb{T}$ with the Pontrjagin dual of $K_N^{-1} \mathbb{Z}$.
Set the unitary $U = \sum_{\lambda \in \Lambda_N}  p_\lambda \otimes \zeta_\lambda$ and for any $1 < p < \infty$ the isometric map
\[
\pi_p: S_p^{N+1} \rightarrow S_p^{N+1} \otimes L_p(\mathbb{T}): x \mapsto  U (x \otimes 1)U^\ast.
\]

  For $r>0$, consider the function
\[
m_r (s_1,s_2) = \frac{1}{\pi r^2} \int_{\Vert (s_1-t_1,s_2-t_2) \Vert_2 \leq r } m(t_1,t_2) dt_1 dt_2.
\]
This function is continuous and bounded, and hence we may apply the bilinear De Leeuw restriction theorem \cite[Theorem C]{CJKM} to get
\begin{equation}\label{Eqn=Leeuw}
\begin{split}
& \Vert T_{  m_r\vert_{ (K_N^{-1} \mathbb{Z})^2} }^{(N+1)}: L_{p_1}(\mathbb{T}, S_{p_1}^{N+1}) \times  L_{p_2}(\mathbb{T}, S_{p_2}^{N+1} ) \rightarrow L_{1}(\mathbb{T}, S_{1}^{N+1})   \Vert\\
&\qquad  \leq \Vert T_{ m_r }^{(N+1)}: L_{p_1}(\mathbb{R}, S_{p_1}^{N+1}) \times  L_{p_2}(\mathbb{R}, S_{p_2}^{N+1}) \rightarrow L_{1}(\mathbb{R}, S_{1}^{N+1} )   \Vert.
\end{split}
\end{equation}
 Since $ m_r\vert _{ (K_N^{-1} \mathbb{Z})^2}$ converges to $m \vert_{ (K_N^{-1} \mathbb{Z})^2}$ pointwise, we obtain (by considering the action of the multiplier on functions with finite frequency support),
\begin{equation}\label{Eqn=Approx}
\begin{split}
& \lim_{r\searrow 0} \Vert T_{ m_r\vert _{ (K_N^{-1} \mathbb{Z})^2 } }^{(N+1)}: L_{p_1}(\mathbb{T}, S_{p_1}^{N+1}) \times  L_{p_2}(\mathbb{T}, S_{p_2}^{N+1} ) \rightarrow L_{1}(\mathbb{T}, S_{1}^{N+1})  \Vert
\\
& \qquad  = \Vert T_{ m\vert_{ (K_N^{-1} \mathbb{Z})^2} }^{(N+1)}: L_{p_1}(\mathbb{T}, S_{p_1}^{N+1}) \times  L_{p_2}(\mathbb{T}, S_{p_2}^{N+1} ) \rightarrow L_{1}(\mathbb{T}, S_{1}^{N+1})\Vert.
\end{split}
\end{equation}
Further, viewing $m_r$ as a convolution of $m$ with an $L_1(\mathbb{R}^2)$ function, from \cite[Lemma 4.3]{CJKM},
\begin{equation}\label{Eqn=Convolve}
\begin{split}
&\Vert T_{ m_r }^{(N+1)}: L_{p_1}(\mathbb{R}, S_{p_1}^{N+1}) \times  L_{p_2}(\mathbb{R}, S_{p_2}^{N+1} ) \rightarrow L_{1}(\mathbb{R}, S_{1}^{N+1}) \Vert \\
&\qquad   \leq \Vert T_{ m}^{(N+1)}: L_{p_1}(\mathbb{R}, S_{p_1}^{N+1}) \times  L_{p_2}(\mathbb{R}, S_{p_2}^{N+1} ) \rightarrow L_{1}(\mathbb{R}, S_{1}^{N+1})   \Vert = B_{p_1, p_2,N}.
\end{split}
\end{equation}
Combining the estimates  \eqref{Eqn=Approx}, \eqref{Eqn=Leeuw}, \eqref{Eqn=Convolve} we find that
\begin{equation}\label{Eqn=CZmiddle}
\Vert T_{ m\vert_{ (K_N^{-1} \mathbb{Z})^2} }^{(N+1)}: L_{p_1}(\mathbb{T}, S_{p_1}^{N+1}) \times  L_{p_2}(\mathbb{T}, S_{p_2}^{N+1} ) \rightarrow L_{1}(\mathbb{T}, S_{1}^{N+1})\Vert\leq B_{p_1, p_2,N}.
\end{equation}

We view  $\widetilde{m}  \vert_{\Lambda_N \times \Lambda_N \times \Lambda_N}$ as the symbol of a Schur multiplier $M_{  \widetilde{ m}  \vert_{\Lambda_N \times \Lambda_N \times \Lambda_N}   }: S_{p_1}^{N+1}  \times   S_{p_2}^{N+1}  \rightarrow  S_{1}^{N+1}$.
  Then,
\[
T_{ m\vert_{K_N^{-1} \mathbb{Z}} }^{(N+1)} \circ (\pi_{p_1} \times \pi_{p_2}) = \pi_p \circ  M_{  \widetilde{ m} \vert_{\Lambda_N \times \Lambda_N \times \Lambda_N}    }.
\]
 It follows with \eqref{Eqn=CZmiddle} that
\begin{equation}\label{Eqn=CZ1}
\begin{split}
&  \Vert  M_{ \widetilde{m} \vert_{\Lambda_N \times \Lambda_N \times \Lambda_N}  }:   S_{p_1}^{N+1}  \times   S_{p_2}^{N+1}  \rightarrow  S_{1}^{N+1}  \Vert \\
\leq &
\Vert T_{ m\vert_{K_N^{-1} \mathbb{Z}}}^{(N+1)}: L_{p_1}(\mathbb{T}, S_{p_1}^{N+1} ) \times  L_{p_2}(\mathbb{T}, S_{p_2}^{N+1} ) \rightarrow L_{1}(\mathbb{T}, S_{p}^{N+1} )   \Vert
 \leq B_{p_1, p_2,N+1}.
\end{split}
\end{equation}
By \cite[Theorem 2.3]{PSSTAdvances} we find that
\begin{equation}\label{Eqn=CZ2}
 \Vert M_{ \widetilde{m} \vert_{\Lambda_N \times \Lambda_N \times \Lambda_N}( \: \cdot \:, 0 \: \cdot \: ) }: S_{1}^{N+1}  \rightarrow  S_{1}^{N+1}  \Vert
\leq \Vert M_{ \widetilde{m} \vert_{\Lambda_N \times \Lambda_N \times \Lambda_N}  }:  S_{p_1}^{N+1}  \times    S_{p_2}^{N+1}  \rightarrow   S_{p}^{N+1}  \Vert.
\end{equation}
Now for $s,t \in \mathbb{R}_{> 0}$ we find
\[
\widetilde{m}(s, 0, t) = m(s-0, 0-t) =  \frac{s}{s+t} = \frac{1}{2} ( 1 + \frac{s - t}{s+t}) = \frac{1}{2} ( 1 + \phi(s,t)).
\]
  It follows therefore by the first paragraph that for some constant $C>0$,
\[
  C \log(N) \leq
\Vert M_{ \widetilde{m} \vert_{\Lambda_N \times \Lambda_N \times \Lambda_N}( \: \cdot \:, 0 \: \cdot \: ) }: S_{1}^{N+1}  \rightarrow  S_{1}^{N+1}  \Vert.
\]
The combination of the latter estimate with  \eqref{Eqn=CZ1}   yields the result.
\end{proof}

\begin{remark}
  In \cite{GrafakosTorres} it is shown that for a natural class of Calder\'on-Zygmund operators, the associated convolution operator is bounded as a map $L_1 \times L_1 \rightarrow L_{\frac{1}{2}, \infty}$ as well as $L_{p_1} \times L_{p_2} \rightarrow L_p$ with $\frac{1}{p} = \frac{1}{p_1} + \frac{1}{p_2}$ and $\frac{1}{2} < p < \infty, 1<p_1,p_2<\infty$. This applies in particular to the map $T_m$ with symbol $m$ as in \eqref{Eqn=Symbolm}, see \cite[Proposition 6]{GrafakosTorres}. Our example shows that this result does not extend to the vector-valued setting in case $\frac{1}{2} < p \leq 1$. On the other hand, affirmative results in case  $1 < p < \infty$, and $\frac{1}{p}=\frac{1}{p_1}+\frac{1}{p_2}$ were obtained in \cite{DiPlinio20}. The question remains open whether a weak $L_1$-bound $L_{p_1} \times L_{p_2} \rightarrow L_{1, \infty}, \frac{1}{p_1} + \frac{1}{p_2} = 1$ holds, even in the case $p_1 = p_2 = 2$.
\end{remark}

\end{document}